\numberwithin{equation}{section}
\theoremstyle{plain}
\newtheorem{theorem}{Theorem}[section]
\newtheorem{corollary}[theorem]{Corollary}
\newtheorem{proposition}[theorem]{Proposition}
\theoremstyle{definition}
\newtheorem{example}[theorem]{Example}
\newtheorem{definition}[theorem]{Definition}
\newcommand\scalemath[2]{\scalebox{#1}{\mbox{\ensuremath{\displaystyle #2}}}}
\DeclareMathOperator{\pd}{pd}
\DeclareMathOperator{\reg}{reg}
\DeclareMathOperator{\V}{Vert}
\tikzstyle arrowstyle=[scale=1.5]
\tikzstyle directed=[postaction={decorate,decoration={markings,
    mark=at position .65 with {\arrow[arrowstyle]{latex}}}}]
     \title{Minimal Free Resolutions of $2\times n$ Domino Tilings}
     \author{Rachelle R. Bouchat}
     \address{Department of Mathematics, Indiana University of Pennsylvania, 1011 South Drive, Indiana, PA, 15705, USA}
     \email{ rbouchat@iup.edu}
     \author{Tricia Muldoon Brown}
     \address{Department of Mathematics, Armstrong State University, 11935 Abercorn Street, Savannah, GA 31419, USA}
     \email{patricia.brown@armstrong.edu}
     \keywords{Betti numbers, squarefree monomial ideals, domino tiling}
     \subjclass{05E40, 13A15}
\begin{document}

\begin{abstract}
We introduce a squarefree monomial ideal associated to the set of domino tilings of a $2\times n$ rectangle and proceed to study the associated minimal free resolution.  In this paper, we use results of Dalili and Kummini to show that the Betti numbers of the ideal are independent of the underlying characteristic of the field, and apply a natural splitting to explicitly determine the projective dimension and Castelnuovo-Mumford regularity of the ideal.
\end{abstract}
\maketitle

\section{Introduction}
Squarefree monomial ideals lie in the intersection of three areas of mathematics: commutative algebra, combinatorics, and simplicial topology.  As initial ideals of any arbitrary ideal in a polynomial ring are monomial, much effort has been made to understand monomial ideals, with the groundwork on the study of initial ideals due to Stanley and Reisner.  (See Miller and Sturmfels~\cite{Miller_Sturmfels} for a comprehensive overview of these results.)  Many natural combinatorial objects can be used to generate squarefree monomial ideals.  Making a connection to the field of graph theory, Conca and De Negri~\cite{Conca_DeNegri} introduced the study of edge ideals which are squarefree monomial ideals generated from the edges of a graph.  Of interest is to explicitly describe the minimal free resolution associated to the monomial ideal using combinatorial objects.  In particular, we wish to associate the Betti numbers of the minimal free resolution with these combinatorial or topological objects.  Edge ideal results have been extended to the study of path ideals by Bouchat, H\'a, and O'Keefe in~\cite{Bouchat_Ha_Okeefe} and further generalized to facet ideals of simplicial complexes by Faridi~\cite{Faridi} and to path ideals of hypergraphs by H\'a and Van Tuyl~\cite{Ha_VanTuyl}. 

For the purposes of this paper, we associate squarefree monomial ideals with domino tilings of $2\times n$ rectangles, that is disjoint arrangements of  $2\times 1$ tiles placed horizontally or vertically to completely cover the area of the rectangle.  Domino tilings are well-studied classical combinatorial objects with many interesting properties.  For example, a classic exercise can show the number of domino tilings of a $2\times n$ rectangle is given by the $(n-1)^{st}$ Fibonacci number, and a survey paper by Ardilla and Stanley~\cite{Ardila_Stanley} gives many results for domino and more generalized tilings of the plane.  Tilings have been studied in terms of their enumeration, intersection, and graph theory.  See Fisher and Temperley~\cite{Fisher_Temperley} and Kasteleyn~\cite{Kasteleyn} for the first enumerative results, Butler, Horn, and Tressler~\cite{Butler} for intersection results, or Benedetto and Loehr~\cite{Benedetto_Loehr} for graph theoretical results.  This myriad of results suggests that interpretation as monomial ideals will also be of interest.  In this work, we show the Betti numbers of the ideal corresponding to the set of all $2\times n$ domino tilings are independent of the characteristic of the underlying field.  Further, we apply a natural splitting to the facet ideals generated by the ideals of domino tilings to give a recursion on the Betti numbers and determine the projective dimension and Castelnuovo-Mumford regularity.

We note, the generating elements of these ideals formed from the set of all $2 \times n$ domino tilings can also be viewed as paths of a graph.  However, the ideals are not path ideals, as their generating sets do not correspond to all paths of a specified length within a graph, but rather just a subset (see Example~\ref{pathIdealsEx}).

\section{Minimal free resolutions and domino tilings}

%
We begin with some background on minimal free resolutions. Let $R=k[x_1,\ldots,x_n]$ be the polynomial ring in the variables $x_1,\ldots, x_n$ over the field $k$, and let $M$ be a finitely generated graded $R$-module.  Associated to $M$ is a \emph{minimal free resolution}, which is of the form
	\[0\rightarrow\bigoplus_{\bf a} R(-{\bf a})^{\beta_{p,{\bf{a}}}(M)}\stackrel{\delta_p}{\longrightarrow}\bigoplus_{\bf a} R(-{\bf a})^{\beta_{p-1,{\bf a}}(M)}\stackrel{\delta_{p-1}}{\longrightarrow}
	  \cdots\stackrel{\delta_1}{\longrightarrow}\bigoplus_{\bf a} R(-{\bf a})^{\beta_{0,{\bf a}}(M)}\rightarrow M\rightarrow 0\]
where the maps $\delta_i$ are exact and where $R(-{\bf a})$ denotes the translation of $R$ obtained by shifting the degree of elements of $R$ by ${\bf a}\in\mathbb{N}^n$.  The numbers $\beta_{i,{\bf a}}(M)$ are called the \emph{multi-graded Betti numbers} (or \emph{$\mathbb{N}^n$-graded Betti numbers}) of $M$, and they correspond to the number of minimal generators of degree ${\bf a}$ occurring in the $i^{th}$-syzygy module of $M$.  Of more interest here are the \emph{graded Betti numbers} (or \emph{$\mathbb{N}$-graded Betti numbers}) of $M$ defined as $\displaystyle{\beta_{i,j}(M):=\bigoplus_{a_1+\cdots +a_n=j}\beta_{i,{\bf a}}(M)}$ where ${\bf a}=(a_1,\ldots, a_n)$.  

There are two invariants corresponding to the minimal free resolution of $M$ which measure the size of the resolution.
\begin{definition} Let $M$ be a finitely generated graded $R$-module.  
\begin{enumerate}
	\item The \emph{projective dimension} of $M$, denoted $\pd(M)$, is the length of the minimal free resolution associated to $M$.
	\item The \emph{Castelnuovo-Mumford regularity} (or \emph{regularity}), denoted $\reg(M)$, is 
		\[\reg(M):=\max\{j-i \mid \beta_{i,j}(M)\neq 0\}.\]
\end{enumerate}
\end{definition}

We will define the squarefree monomial ideals of interest in this paper by associating dominos from a tiling of a $2\times n$ array as follows:  
\begin{enumerate}
\item[i.] From left to right, each vertical position composed of squares $(1,k)$ and $(2,k)$ is associated with the domino identified by the variable $y_k$ for $1\leq k \leq n$.  
\item[ii.] Each horizontal position composed of squares $(1,k)$ and $(1,k+1)$ is associated with the domino given by the variable $x_k$ for $1\leq k \leq n-1$; and similarly, each horizontal position with squares $(2,k)$ and $(2,k+1)$ is labeled with the domino variable $x_{n-1+k}$ for $1\leq k \leq n-1$.  
\end{enumerate}
As the dominos are disjoint, a tiling of the array by $n$ dominos can then be represented as a squarefree monomial of degree $n$.  More formally, we state the following definitions.

\begin{definition} Consider a $2\times n$ rectangle, $D$.
\begin{enumerate}
	\item A \emph{tiling} $\tau$ of $D$ is a degree $n$ squarefree monomial $\tau=z_1 z_2 \cdots z_n$ where \\
	$z_i \in \{x_1, x_2,  \ldots, x_{2n-2}, y_1, y_2, \ldots, y_n\}$ and, when considering the variables as dominos in the array, we have $z_i \cap z_j = \emptyset$ for all $1\leq i, j \leq n$.
	We will let $T_n$ denote the set of all tilings $\tau$ of $D$.
	
	\item The \emph{domino ideal} corresponding to $T_n$ is the ideal generated by all tilings in $T_n$, i.e. 
		\[I_n := (\tau \mid \tau\in T_n) \subseteq R=k[x_1, \ldots, x_{2n-2}, y_1, \ldots, y_n].\]
\end{enumerate}
\end{definition}
We note, because these are tilings of rectangles with height two, if $x_k | \tau$ then consequently $x_{n-1+k} | \tau$.  Further, in a slight abuse of notation, we will let $\tau$ denote both the tiling and the monomial in the ring $R$ corresponding to the tiling.

\begin{example}\label{pathIdealsEx}
Consider the domino ideal, $I_3$, corresponding to the domino tilings of a $2\times 3$ rectangle:
\begin{center}
\begin{tabular}{lclcl}
  \begin{ytableau}
    *(white)&*(white) &*(white) \\
   *(white) &  *(white) &*(white) 
  \end{ytableau}

& \begin{minipage}[h]{.15in}\end{minipage} &
\begin{minipage}[h]{2in}$T_n=\{x_1x_3y_3,x_2x_4y_1,y_1y_2y_3\}$\end{minipage}
& \begin{minipage}[h]{.15in}\end{minipage} &
\begin{minipage}[h]{2in}$I_n = (x_1x_3y_3,x_2x_4y_1,y_1y_2y_3)$\end{minipage}
\end{tabular}
\end{center}
Notice that the generators of $I_3$, namely the tilings of the $2\times 3$ rectangle, are a subset of the paths of length two in the path graph of length six given below:
\begin{center}
\begin{tikzpicture}
	\draw (0,0)--(6,0);
	\draw[fill=black] (0,0) circle (2pt);
	\node at (0,.35) {$x_1$};
	\draw[fill=black] (1,0) circle (2pt);
	\node at (1,.35) {$x_3$};
	\draw[fill=black] (2,0) circle (2pt);
	\node at (2,.35) {$y_3$};
	\draw[fill=black] (3,0) circle (2pt);
	\node at (3,.35) {$y_2$};
	\draw[fill=black] (4,0) circle (2pt);
	\node at (4,.35) {$y_1$};
	\draw[fill=black] (5,0) circle (2pt);
	\node at (5,.35) {$x_4$};
	\draw[fill=black] (6,0) circle (2pt);
	\node at (6,.35) {$x_2$};
\end{tikzpicture}
\end{center}
\end{example}

In order to understand the minimal free resolutions of the ideals $I_n$ for $n\geq 1$, we need to investigate how the generating domino tilings interact.  In particular, we wish to understand the graded Betti numbers, $\beta_{i,j} (I_n)$, in terms of sets of domino tilings of the $2\times n$ array; that is, in terms of sets $S=\{\tau_1, \tau_2, \ldots, \tau_k\}$ where $\tau_i \in T_n$ for all $1\leq i \leq k$ and for some $k\geq 0$.

Each set of domino tilings may be represented by a squarefree monomial $\mathbf{x}_S$ with variables from the set $\{x_1, x_2, \ldots, x_{2n-2}, y_1, y_2, \ldots, y_n\}$ where $a | \mathbf{x}_S$ if and only if $a | \tau_i$ for some $\tau_i \in S$.   We note, that the set of domino tilings $S$ associated with a squarefree monomial $\mathbf{x}_S$ is not unique.  Thus, sets of domino tilings are first grouped into equivalence classes by distinct monomials with $\mathbf{x}$ being the representative of the equivalence class.  Example~\ref{ex_eqclass} illustrates such an equivalency.
\begin{example}\label{ex_eqclass}  The pair of tilings 
\[S_1 = \left\{  
  \begin{ytableau}
   *(yellow) x_1 & *(yellow) &*(yellow) x_3 &*(yellow) & *(green) y_5\\
   *(yellow)  x_5&  *(yellow) &*(yellow) x_7&*(yellow)& *(green) 
  \end{ytableau}
,  \begin{ytableau}
   *(green) y_1&*(green)y_2&*(green)y_3 &*(yellow)x_4 &*(yellow)  \\
      *(green)&*(green)&*(green) &*(yellow)x_8 &*(yellow) 
  \end{ytableau}
  \right\} \]
is equivalent to the pair of tilings 
\[ S_2 = \left\{
  \begin{ytableau}
   *(yellow) x_1 & *(yellow) & *(green) y_3 &*(yellow) x_4 &*(yellow) \\
   *(yellow) x_5 &  *(yellow)& *(green) &*(yellow) x_8 &*(yellow) 
  \end{ytableau}
,   \begin{ytableau}
   *(green) y_1&*(green) y_2 &*(yellow) x_3 &*(yellow) &*(green) y_5 \\
      *(green)&*(green) &*(yellow)x_7 &*(yellow) &*(green)
  \end{ytableau} 
  \right\}, \]
 because the corresponding monomial $\mathbf{x}=x_1 x_3 x_4 x_5 x_7 x_8 y_1 y_2 y_3 y_5$ is the same for both pairs of tilings.
\end{example}

As before, without loss of generality, the monomial $\mathbf{x}$ may represent both a monomial in the ring $R$ as well as the set of domino tilings $\mathbf{x} =\{ \tau\in T_n\hspace{1mm}:\hspace{1mm} \tau | \mathbf{x}\}$.

In addition to these definitions, we need the following results from simplicial topology.

\begin{definition}\hspace{1mm}\\
\vspace{-.3in}
\begin{enumerate}
	\item An abstract \emph{simplicial complex}, $\Delta$, on a vertex set $\mathcal{X}=\{x_1,\ldots,x_n\}$ is a collection of subsets of $\mathcal{X}$ satisfying:
	\begin{enumerate}
		\item $\{x_i\}\in\Delta$ for all $i$, and
		\item $F\in\Delta$, $G\subset F\Longrightarrow G\in\Delta$.
	\end{enumerate}
	The elements of $\Delta$ are called \emph{faces} of $\Delta$, and the maximal faces (under inclusion) are called \emph{facets} of $\Delta$.  The simplicial complex $\Delta$ with facets $F_1,\ldots, F_s$ will be denoted by $\langle F_1,\ldots, F_s\rangle$.
	\item For any $\mathcal{Y}\subseteq\mathcal{X}$, an \emph{induced subcollection} of $\Delta$ on $\mathcal{Y}$, denoted by $\Delta_{\mathcal{Y}}$, is the simplicial complex whose vertex set is a subset of $\mathcal{Y}$ and whose facet set is given by $\{F\hspace{1mm}|\hspace{1mm}F\subseteq\mathcal{Y}\mbox{ and }F\mbox{ is a facet of }\Delta\}$.
	\item If $F$ is a face of $\Delta=\langle F_1,\ldots, F_s\rangle$, the \emph{complement} of $F$ in $\Delta$ is given by $F_{\mathcal{X}}^c=\mathcal{X}\setminus F$, and the \emph{complementary complex} is then $\Delta_{\mathcal{X}}^c=\langle (F_1)_{\mathcal{X}}^c,\ldots, (F_s)_{\mathcal{X}}^c\rangle$.
\end{enumerate}
\end{definition}

In a extension of a result of Hochster, this complementary complex, $\Delta_X^c$, can be used to determine the Betti numbers of the ideal.

\begin{theorem}[Alilooee and Faridi, Theorem 2.8 in~\cite{Faridi_Alilooee}]\label{Hochster}
Let $R=K[x_1,\ldots, x_n]$ be a polynomial ring over a field $K$, and $I$ be a pure squarefree monomial ideal in $R$.  Then the $\mathbb{N}$-graded Betti numbers of $I$ are given by
	\[\displaystyle{\beta_{i,d}(I)=\sum_{\Gamma\subset\Delta (I), \mid\V{\Gamma}\mid=d}\dim_K\widetilde{H}_{i-1}(\Gamma_{\V(\Gamma)}^c)}\]
where the sum is taken over the induced subcollections of $\Gamma$ of $\Delta(I)$ which have $d$ vertices.
\end{theorem}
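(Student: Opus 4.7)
The plan is to derive the formula from Hochster's classical Betti-number formula, translated from Stanley-Reisner language into facet-complex language via combinatorial Alexander duality. Let $\Delta_{\mathrm{SR}}$ denote the Stanley-Reisner complex of $I$, so that the minimal non-faces of $\Delta_{\mathrm{SR}}$ are exactly the supports of the minimal generators of $I$. Applied to the ideal $I$ itself, Hochster's formula reads
\[\beta_{i,d}(I)=\sum_{\substack{W\subseteq\{x_1,\ldots,x_n\}\\|W|=d}}\dim_K\widetilde{H}_{d-i-2}\bigl((\Delta_{\mathrm{SR}})_W;\,K\bigr),\]
and the goal is to identify each surviving summand with $\dim_K\widetilde{H}_{i-1}\bigl(\Gamma^c_{\V(\Gamma)}\bigr)$, where $\Gamma:=\Delta(I)_W$ is the induced subcollection of the facet complex on $W$.

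I would first collapse the outer sum over arbitrary vertex subsets to one over vertex sets of induced subcollections. For any $v\in W\setminus\V(\Gamma)$, no minimal generator support contained in $W$ contains $v$, which forces $v$ to belong to every facet of $(\Delta_{\mathrm{SR}})_W$; hence $v$ is a cone point, and the reduced homology of $(\Delta_{\mathrm{SR}})_W$ vanishes. Thus only $W=\V(\Gamma)$ contributes. The heart of the argument is then the Alexander-duality identification
\[\widetilde{H}_{d-i-2}\bigl((\Delta_{\mathrm{SR}})_{\V(\Gamma)};\,K\bigr)\;\cong\;\widetilde{H}_{i-1}\bigl(\Gamma^c_{\V(\Gamma)};\,K\bigr).\]
This falls out because the minimal non-faces of $(\Delta_{\mathrm{SR}})_{\V(\Gamma)}$, viewed as a complex on the vertex set $\V(\Gamma)$, are exactly the facets of $\Gamma$: minimality of the generators of $I$ ensures that no proper subset of a generator support is a non-face, while purity makes the common cardinality explicit. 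Their complements inside $\V(\Gamma)$ are precisely the facets of $\Gamma^c_{\V(\Gamma)}$, which exhibits $\Gamma^c_{\V(\Gamma)}$ as the Alexander dual of $(\Delta_{\mathrm{SR}})_{\V(\Gamma)}$. Combinatorial Alexander duality over a field then yields the index shift $(d-i-2)\mapsto i-1$ via $|\V(\Gamma)|=d$.

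The main obstacle is the careful bookkeeping of the restriction/induced-subcollection dictionary: one must verify that the cone-point argument genuinely kills all contributions with $W\neq\V(\Gamma)$, that restricting $\Delta_{\mathrm{SR}}$ to $\V(\Gamma)$ yields a complex whose minimal non-faces are exactly the facets of $\Gamma$ (no extraneous small non-faces arise from generators not supported in $W$), and that the Alexander-dual index shift lands precisely at $i-1$ with no off-by-one error. The purity hypothesis streamlines this bookkeeping by forcing all facets of $\Gamma$ and all facets of $\Gamma^c_{\V(\Gamma)}$ to share common dimensions, so the duality can be invoked uniformly without case analysis on generator degree.
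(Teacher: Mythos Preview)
The paper does not prove this statement at all: it is quoted verbatim as Theorem~2.8 of Alilooee--Faridi and used as a black box, so there is no in-paper argument to compare against. Your derivation is correct and is precisely the standard route---Hochster's formula for $\beta_{i,d}(I)$, a cone-point argument to discard vertex sets $W$ that strictly contain $\V(\Delta(I)_W)$, and combinatorial Alexander duality to identify $(\Delta_{\mathrm{SR}})_{\V(\Gamma)}$ with the dual of $\Gamma^c_{\V(\Gamma)}$ and shift the homological index from $d-i-2$ to $i-1$. This is essentially how Alilooee and Faridi themselves obtain the result. One minor remark: your argument nowhere actually uses purity, and indeed the formula holds without that hypothesis; purity is present in the statement because the cited paper works in that setting, not because the proof requires it.
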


Also of interest, are two subcomplexes of $\Delta$.
\begin{definition}
Let $\Delta$ be a simplicial complex having vertex set $V=\{x_1,\ldots, x_n\}$, and let $x \in V$.
\begin{enumerate}
	\item The \emph{deletion} of $\Delta$ with respect to the vertex $x$ is $del_\Delta (x) = \Delta|_{V\setminus \{x\}}$.
	\item The \emph{link} of $x$ in $\Delta$ is $lk_\Delta (x) = \{F\in \Delta | F\cap \{x\} = \emptyset \hbox{ and } F \cup \{x\} \in \Delta\}$.
\end{enumerate}
\end{definition}

The following example illustrates the link and deletion complexes for the domino tilings of a $2\times 4$ rectangle.

\begin{example}\label{ex_n=4}
In the case $n=4$, the ideal generated by the set of all possible domino tilings is
\begin{equation*}
I_4 =(x_1 x_3 x_4 x_6, x_1x_4y_3y_4, x_2x_5y_1y_4, x_3x_6y_1y_2, y_1y_2y_3y_4),
\end{equation*}
with complementary complex given by
\begin{equation*}
\Gamma_4^c = \langle x_2x_5y_1y_2y_3y_4, x_2x_3x_5x_6y_1y_2, x_1x_3x_4x_6y_2y_3, x_1 x_2 x_4 x_5 y_3 y_4, x_1 x_2 x_3 x_4 x_5 x_6 \rangle.
\end{equation*}
The deletion and link with respect to the vertex $y_3$ are:
\begin{eqnarray*}
del(y_3) &=& \langle x_2x_5y_1y_2 y_4, x_2x_3x_5x_6y_1y_2, x_1x_3x_4x_6y_2, x_1 x_2 x_4 x_5 y_4, x_1 x_2 x_3 x_4 x_5 x_6 \rangle\\
lk(y_3) &=&  \langle x_2x_5y_1y_2y_4,  x_1x_3x_4x_6y_2, x_1 x_2 x_4 x_5  y_4 \rangle
\end{eqnarray*}
Let us identify the vertices corresponding to stacked pairs of horizontal tiles; that is, $x_1$ with $x_4$, $x_2$ with $x_5$, and $x_3$ with $x_6$.  Then, the deletion complex of $y_3$ is contractible to Figure~\ref{dely3} on the left, and the link of $y_3$, which is homotopic to a sphere of dimension one, is given by Figure~\ref{lky3} on the right.\\

\begin{center}
\begin{tabular}{lcl}
\begin{minipage}{1.75in}
\begin{tikzpicture}[scale=.5]
\clip(-1,-3.1) rectangle (7,4);
	\draw[fill=red!50] (6,-2)--(4,-1)--(3,1)--(6,-2);
	\draw[fill=black!50] (0,0)--(1.2,3)--(2,-2)--(0,0);
	\draw[fill=blue!50] (1.2,3)--(2,-2)--(4,-1)--(1.2,3);
	\draw[fill=blue!50] (1.2,3)--(4,-1)--(3,1)--(1.2,3);
	\draw[fill=green!70] (4,-1)--(6,-2)--(2,-2)--(4,-1);
	\draw[fill=yellow!80] (0,0) to [out=-130, in=-90](6,-2)--(2,-2)--(0,0);
	\draw (1.2,3)--(2,-2)--(4,-1)--(3,1)--(1.2,3)--(4,-1)--(6,-2);
	\draw(1.2,3)--(0,0)--(2,-2)--(6,-2)--(3,1);
	\draw(0,0) to[out=-130, in=-90] (6,-2);
	\draw[style=dashed] (0,0)--(3,1);
	\draw[style=dashed] (2,-2)--(3,1);	
	\draw[fill=black] (0,0) circle (3pt) node[left]{$y_4$};
	\draw[fill=black] (2,-2) circle (3pt) node[left]{$x_2$};
	\draw[fill=black] (4,-1) circle (3pt) node[below]{$x_3$};
	\draw[fill=black] (6,-2) circle (3pt) node[right]{$x_1$};
	\draw[fill=black] (3,1) circle (3pt) node[right]{$y_2$};
	\draw[fill=black] (1.2,3) circle (3pt) node[above]{$y_1$};
\end{tikzpicture}
\captionof{figure}{$del(y_3)$}
\label{dely3}
\end{minipage}
& \hspace{.25in} &
\begin{minipage}{1.75in}
\begin{tikzpicture}[scale=.5]
\clip(-1,-3.1) rectangle (7,4);
	\draw[fill=red!50] (6,-2)--(4,-1)--(3,1)--(6,-2);
	\draw[fill=black!50] (0,0)--(1.2,3)--(2,-2)--(0,0);
	\draw[fill=black!50] (1.2,3)--(2,-2)--(3,1);
	\draw[fill=yellow!80] (0,0) to [out=-130, in=-90](6,-2)--(2,-2)--(0,0);
	\draw (1.2,3)--(2,-2);
	\draw (4,-1)--(3,1)--(1.2,3);
	\draw (4,-1)--(6,-2);
	\draw(1.2,3)--(0,0)--(2,-2)--(6,-2)--(3,1);
	\draw(0,0) to[out=-130, in=-90] (6,-2);
	\draw[style=dashed] (0,0)--(3,1);
	\draw[style=dashed] (2,-2)--(3,1);	
	\draw[fill=black] (0,0) circle (3pt) node[left]{$y_4$};
	\draw[fill=black] (2,-2) circle (3pt) node[left]{$x_2$};
	\draw[fill=black] (4,-1) circle (3pt) node[below]{$x_3$};
	\draw[fill=black] (6,-2) circle (3pt) node[right]{$x_1$};
	\draw[fill=black] (3,1) circle (3pt) node[right]{$y_2$};
	\draw[fill=black] (1.2,3) circle (3pt) node[above]{$y_1$};
\end{tikzpicture}
\captionof{figure}{$lk(y_3)$}
\label{lky3}
\end{minipage}
\end{tabular}
\end{center}

In the case $n=3$, the deletion and link with respect to the vertex $y_2$ also correspond to a ball and a sphere in lower dimensions. In this case, the ideal generated by the set of all possible domino tilings is 
\begin{equation*}
I_3=(x_1 x_3 y_3, x_2 x_4 y_1, y_1 y_2 y_3)
\end{equation*}
and the complementary complex is given by 
\begin{equation*}
\Gamma_3^c =\langle x_2 x_4 y_1 y_2, x_1 x_3 y_2 y_3, x_1 x_2 x_3 x_4\rangle.
\end{equation*}
The deletion and link with respect to the vertex $y_2$ are:
\begin{eqnarray*}
del(y_2) &=& \langle x_2 x_4 y_1, x_1 x_3 y_3, x_1 x_2 x_3 x_4 \rangle.\\
lk(y_2) &=& \langle  x_2 x_4 y_1, x_1 x_3 y_3\rangle
\end{eqnarray*}
which, after identifying stacked horizontal dominos, correspond to a 2-dimensional ball and 0-dimensional sphere, respectively.
\end{example}

In the next section, we will utilize the link and deletion complexes and apply results of Dalili and Kummini~\cite{Dalili_Kummini} to show the graded Betti numbers of $I_n$ are independent of the characteristic of the field $k$.


\section{Independence of Characteristic}

In order to show independence of characteristic for domino ideals, we will explore the deletion and link of a certain vertical domino in the complementary complex, but first we need a result on the homotopy type of the complementary complex $\Gamma_n^c$.

\begin{proposition}
\label{gamma_prop}
Let $\Gamma_n^c$ be the complementary complex associated to the domino ideal $I_n$.  Then, $\Gamma_n^c$ is homotopic to a sphere of dimension $n-2$.
\end{proposition}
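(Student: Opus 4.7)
The plan is to proceed by induction on $n$, making use of the standard suspension lemma from combinatorial topology: for a simplicial complex $\Delta$ and a vertex $v$, if $\mathrm{del}_\Delta(v)$ is contractible, then $\Delta\simeq\Sigma\,\mathrm{lk}_\Delta(v)$. This follows from the decomposition $\Delta=\mathrm{del}_\Delta(v)\cup\mathrm{star}_\Delta(v)$, the cone-contractibility of $\mathrm{star}_\Delta(v)$, and a homotopy pushout argument. The base case $n=2$ is immediate: $\Gamma_2^c=\langle x_1x_2,\,y_1y_2\rangle$ is the disjoint union of two $1$-simplices, homotopy equivalent to $S^0=S^{n-2}$.

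For the inductive step, assume $\Gamma_m^c\simeq S^{m-2}$ for all $2\le m<n$. Motivated by Example~\ref{ex_n=4}, I would single out the vertex $v=y_{n-1}$ and aim to verify: (i) $\mathrm{del}_{\Gamma_n^c}(y_{n-1})$ is contractible, and (ii) $\mathrm{lk}_{\Gamma_n^c}(y_{n-1})\simeq S^{n-3}$. Together with the suspension lemma these give $\Gamma_n^c\simeq\Sigma S^{n-3}=S^{n-2}$.

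For (i), the key observation is that any tiling $\tau\in T_n$ containing $y_{n-1}$ must also contain $y_n$, since column $n$ is then forced to be vertical. So every facet of $\Gamma_n^c$ arising from such a $\tau$ omits both $y_{n-1}$ and $y_n$, and $\mathrm{del}_{\Gamma_n^c}(y_{n-1})$ organizes into a union of a few contractible subcomplexes, each of which is a simplicial join of a fixed simplex with a smaller complementary complex $\Gamma_{n-k}^c$ (with $k=2$ or $3$). A nerve-lemma argument, using that the pairwise intersections of these pieces are themselves contractible joins, yields contractibility of the full deletion. For (ii), tilings avoiding $y_{n-1}$ split into two families by the position of the forced horizontal pair at column $n-1$: those of the form $\sigma\,x_{n-1}x_{2n-2}$ for $\sigma\in T_{n-2}$, and those of the form $\sigma'\,x_{n-2}x_{2n-3}y_n$ for $\sigma'\in T_{n-3}$. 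Correspondingly $\mathrm{lk}_{\Gamma_n^c}(y_{n-1})=L_a\cup L_b$ where $L_a\cong\sigma_a\ast\Gamma_{n-2}^c$ and $L_b\cong\sigma_b\ast\Gamma_{n-3}^c$ for suitable fixed simplices $\sigma_a,\sigma_b$, both contractible. A second application of the suspension lemma inside the link gives $\mathrm{lk}_{\Gamma_n^c}(y_{n-1})\simeq\Sigma(L_a\cap L_b)$, and an analysis of the intersection identifies $L_a\cap L_b$, up to homotopy equivalence, with $\Gamma_{n-2}^c$; the inductive hypothesis then yields $\mathrm{lk}_{\Gamma_n^c}(y_{n-1})\simeq\Sigma S^{n-4}=S^{n-3}$.

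The principal technical obstacle is the homotopy-equivalence identification $L_a\cap L_b\simeq\Gamma_{n-2}^c$, since the combinatorial structures differ at the level of vertex sets: the intersection carries extra "always present" vertices inherited from the joined simplex parts. The resolution exploits the twin-vertex structure of $\Gamma_n^c$, in which the pair $x_k$ and $x_{n-1+k}$ lies in exactly the same facets (since the top and bottom horizontal dominoes covering columns $(k,k+1)$ always appear together in a tiling). Such twin vertices have contractible links and can be collapsed without changing homotopy type; after collapse, a facet-level bijection reflecting the Fibonacci recursion $|T_{n-1}|=|T_{n-2}|+|T_{n-3}|$ aligns $L_a\cap L_b$ with $\Gamma_{n-2}^c$. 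Establishing contractibility of $\mathrm{del}(y_{n-1})$ in step (i) is comparatively straightforward, relying on the forced co-occurrence of $y_{n-1}$ and $y_n$ to provide a common structural "hub" across the facets.
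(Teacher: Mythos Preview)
Your approach is sound, but it differs structurally from the paper's. The paper proves Proposition~\ref{gamma_prop} by decomposing along the \emph{last} column: splitting $T_n$ according to whether $y_n$ or $x_{n-1}x_{2n-2}$ appears yields (after identifying each $x_k$ with $x_{n-1+k}$) the decomposition
\[
\Gamma_n^c \;=\; \bigl(\Gamma_{n-1}^c * x_{n-1}\bigr)\;\cup\;\bigl(\Gamma_{n-2}^c * x_{n-2}y_{n-1}y_n\bigr),
\]
and then argues directly with cones: the second piece, together with $\Gamma_{n-1}^c$, is the cone by $y_n$ over the inductive sphere $\Gamma_{n-1}^c\simeq S^{n-3}$, hence an $(n-2)$--ball with $\Gamma_{n-1}^c$ as boundary; coning that boundary by $x_{n-1}$ produces an $(n-2)$--sphere. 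No link/deletion or suspension lemma is invoked. You instead work at the vertex $y_{n-1}$ and use the suspension lemma, which forces you to analyze $\mathrm{lk}_{\Gamma_n^c}(y_{n-1})$ in detail; that analysis---including the identification $L_a\cap L_b\simeq\Gamma_{n-2}^c$ via the twin-vertex collapse---is precisely the substance of the paper's separate Proposition~\ref{link_deletion_prop}, which the paper proves \emph{after} and \emph{using} Proposition~\ref{gamma_prop}. So your route effectively merges the two propositions into a single induction. The advantage of the paper's ordering is economy: the last-column decomposition gives Proposition~\ref{gamma_prop} with only the mild inclusion $\Gamma_{n-1}^c\subset\Gamma_{n-2}^c*x_{n-2}y_{n-1}$ to verify, deferring the more delicate intersection analysis. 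Your approach buys a unified treatment of both results at once, at the cost of front-loading the hardest step (the homotopy equivalence $L_a\cap L_b\simeq\Gamma_{n-2}^c$).
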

\begin{proof}
We proceed by induction on $n$.  In the base case, when $n=3$, we have
\begin{equation*} 
\Gamma_3^c =\langle x_2 x_4 y_1 y_2, x_1 x_3 y_2 y_3, x_1 x_2 x_3 x_4\rangle.
\end{equation*}
Applying a deformation retraction which identifies the vertices $x_1$ and $x_2$, respectively, with $x_3$ and $x_4$, respectively, $\Gamma_3^c$ is homotopic to the complex $\langle x_2 y_1 y_2, x_1 y_2 y_3, x_1 x_2 \rangle$, that is, the complex of two triangles adjoined at a point with two other vertices connected by a line.  Thus $\Gamma_3^c$ is homotopic to a 1-dimensional sphere.

For $n>3$, we separate the generators of $I_n$ into sets of those containing $y_n$ and those containing $x_{n-1}$ and, continuing to identify $x_k$ with $x_{k+n-1}$, the complementary complex is the union
\begin{equation*}
\Gamma_n^c = \Gamma_{n-1}^c * x_{n-1} \cup \Gamma_{n-2} *x_{n-2} y_{n-1} y_n.
\end{equation*}
Because $\Gamma_{n-1}^c$ can similarly be written as $\Gamma_{n-2}^c * x_{n-2} \cup \Gamma_{n-3}^c*y_{n-1}$ and because every face in $\Gamma_{n-3}^c$ is a face of $\Gamma_{n-2}^c$, we see every face of $\Gamma_{n-1}^c$ is contained in a face of $\Gamma_{n-2}^c*x_{n-2} y_{n-1}$.  Further, each face $F \in \Gamma_{n-1}^c$ is contained in the face $F*y_n \in \Gamma_{n-2}^c * x_{n-2} y_{n-1}y_n$.  In particular, because the faces of $\Gamma_{n-1}^c$ make up a $(n-3)$-dimensional sphere and because $y_n$ is not contained in $\Gamma_{n-1}^c$, we have that $\Gamma_{n-1}^c \cup \Gamma_{n-2}^c * x_{n-2}y_{n-1}y_n$ is the cone by $y_n$ over the sphere and thus is homotopic to a $(n-2)$-dimensional ball.  Without loss of generality, we may assume the faces of  $\Gamma_{n-1}^c$ are the boundary of this $(n-2)$-dimensional ball.  Finally, we note that $x_{n-1}$ is not contained in any face of $\Gamma_{n-2}^c*x_{n-2} y_{n-1} y_n$, and so we see that $\Gamma_{n-1}^c * x_{n-1} \cup \Gamma_{n-2} *x_{n-2} y_{n-1} y_n$ is the cone by $x_{n-1}$ of the boundary of the $(n-2)$-dimensional ball. Therefore, $\Gamma_n^c$ is homotopic to a sphere of dimension $n-2$.
\end{proof}

Now consider the link and deletion of the complementary complex with respect to the vertical domino~$y_{n-1}$.

\begin{proposition}
\label{link_deletion_prop}
Let $\Gamma_n^c$ be the complementary complex associated to the domino tiling ideal $I_n$.  Then, the link, $lk_{\Gamma_n^c} (y_{n-1})$, is homotopic to a sphere of dimension $n-3$ for $n\geq 3$.  Further, the deletion complex, $del_{\Gamma_n^c}(y_{n-1})$, is contractible.
\end{proposition}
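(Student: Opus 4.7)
The plan is to refine the decomposition $\Gamma_n^c = \Gamma_{n-1}^c * x_{n-1} \cup \Gamma_{n-2}^c * x_{n-2}y_{n-1}y_n$ from the proof of Proposition~\ref{gamma_prop} by classifying the facets of $\Gamma_n^c$ according to whether they contain $y_{n-1}$. Facets in the second join piece all contain $y_{n-1}$, while facets from the first piece contain $y_{n-1}$ exactly when the underlying $(n-1)$-tiling avoids $y_{n-1}$ (forcing it to end in two horizontal tiles). From this I can read off the facets of both $del_{\Gamma_n^c}(y_{n-1})$ and $lk_{\Gamma_n^c}(y_{n-1})$.

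For the deletion, the facets organize into three subcomplexes $X = \Gamma_{n-2}^c * \{x_{n-2}, x_{n-1}\}$, $Y = \Gamma_{n-2}^c * \{x_{n-2}, y_n\}$, and $Z = \Gamma_{n-3}^c * \{x_{n-3}, y_{n-2}, x_{n-1}\}$, each a simplicial join with a simplex and hence contractible. A direct combinatorial check shows $X \cap Y = \Gamma_{n-2}^c * x_{n-2}$ is a cone, and $(X \cup Y) \cap Z = C_n * x_{n-1}$ is also a cone, where $C_n$ denotes the intersection $R' \cap S$ appearing in the link analysis below. By the standard union-of-contractibles argument (applied twice), $del_{\Gamma_n^c}(y_{n-1})$ is contractible.

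For the link, the facets split as $lk_{\Gamma_n^c}(y_{n-1}) = R' \cup S$ with $R' = \Gamma_{n-3}^c * \{x_{n-3}, y_{n-2}, x_{n-1}\}$ and $S = \Gamma_{n-2}^c * \{x_{n-2}, y_n\}$, both contractible joins with simplices, meeting in $C_n = (\Gamma_{n-3}^c * \{x_{n-3}, y_{n-2}\}) \cap \Gamma_{n-2}^c$. Using the decomposition $\Gamma_{n-2}^c = \Gamma_{n-3}^c * x_{n-3} \cup \Gamma_{n-4}^c * x_{n-4}y_{n-3}y_{n-2}$, I aim to establish the recursion $C_n = (\Gamma_{n-3}^c * x_{n-3}) \cup (C_{n-1} * y_{n-2})$, in which the two pieces are contractible cones meeting in $C_{n-1}$. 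By the standard fact that a union of two contractible CW subcomplexes is homotopy equivalent to the suspension of their intersection, this yields $C_n \simeq \Sigma C_{n-1}$; together with the base case $C_3 = \emptyset$ (verified directly for $n = 3$), induction gives $C_n \simeq S^{n-4}$. Applying the same two-cones-suspension principle to $lk_{\Gamma_n^c}(y_{n-1}) = R' \cup S$ then gives $lk_{\Gamma_n^c}(y_{n-1}) \simeq \Sigma C_n \simeq S^{n-3}$. The main technical obstacle will be establishing the recursion for $C_n$, which requires case-splitting on whether a face contains $y_{n-2}$ and carefully tracking which $y_{n-2}$-containing faces of $\Gamma_{n-3}^c * \{x_{n-3}, y_{n-2}\}$ actually lie in $\Gamma_{n-2}^c$.
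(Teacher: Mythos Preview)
Your decomposition into $X$, $Y$, $Z$ (for the deletion) and $R'$, $S$ (for the link) coincides exactly with the paper's three pieces $A_n^c$, $B_n^c\setminus\{y_{n-1}\}$, $C_n^c\setminus\{y_{n-1}\}$ once the stacked horizontal pairs are identified, so at the level of the underlying combinatorics the two proofs agree. Where you diverge is in the homotopy analysis of the link. The paper invokes Proposition~\ref{gamma_prop} to say $\Gamma_{n-2}^c\simeq S^{n-4}$, then argues geometrically that joining with the edge $\{x_{n-2},y_n\}$ produces an $(n-3)$-ball whose boundary is $\Gamma_{n-2}^c$, and that the remaining piece cones $x_{n-1}$ over (a complex homotopy equivalent to) that boundary; this last step involves a somewhat informal retraction of the vertex $y_{n-3}$. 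Your route instead isolates the intersection $C_n=(\Gamma_{n-3}^c*\{x_{n-3},y_{n-2}\})\cap\Gamma_{n-2}^c$, establishes the recursion $C_n=(\Gamma_{n-3}^c*x_{n-3})\cup(C_{n-1}*y_{n-2})$ as a union of two cones meeting in $C_{n-1}$, and reads off $C_n\simeq\Sigma C_{n-1}$, hence $C_n\simeq S^{n-4}$ and $lk(y_{n-1})\simeq\Sigma C_n\simeq S^{n-3}$. This is a genuinely different and arguably cleaner argument: it replaces the ball-and-boundary picture by a purely combinatorial suspension recursion, does not appeal to Proposition~\ref{gamma_prop}, and avoids the ad~hoc retraction step. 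The trade-off is that the paper's version gives a more vivid geometric picture of \emph{why} the deletion fills the link (the $A_n^c$ piece literally caps the sphere), whereas in your treatment the deletion part comes out as a separate Mayer--Vietoris style computation. For the contractibility of the deletion your two-step union argument with $X\cap Y=\Gamma_{n-2}^c*x_{n-2}$ and $(X\cup Y)\cap Z=C_n*x_{n-1}$ is correct and more explicit than the paper's one-sentence conclusion.
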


\begin{proof}
Divide the set of $2\times n$ tilings into three disjoint sets as follows:
\begin{enumerate}[i.]
\item $A_n=\{ \tau \in T_{n} : y_{n-1} y_{n} | \tau\}$
\item $B_n=\{\tau \in T_{n} : x_{n-1} x_{2n-2} | \tau\}$
\item $C_n = \{\tau \in T_{n} : x_{n-2} x_{2n-3} y_n | \tau\}$ 
\end{enumerate}

\noindent Each of these sets is described by the tiles comprising the rightmost two or three dominos of the tiling.  The facet complexes can also be described this way, because the complements will not contain these two or three dominos.  The facet complexes generated by the complements of the elements in these sets are
\begin{enumerate}[i.]
\item $A_n^c = \Gamma_{n-2}^c * \{x_{n-1} x_{n-2} x_{2n-3} x_{2n-2}\}$
\item $B_n^c =\Gamma_{n-2}^c * \{x_{n-2} x_{2n-3} y_{n-1} y_{n}\}$
\item $C_n^c = \Gamma_{n-3}^c *\{ x_{n-3} x_{n-1} x_{2n-4} x_{2n-2} y_{n-1} y_{n-2}\}$
\end{enumerate}
where $*$ represents the simplicial join.  We then have
\begin{equation*}
del (y_{n-1} ) = \Gamma_{n}^c \setminus \{y_{n-1}\} = A_n^c \cup (B_n^c \setminus \{y_{n-1}\}) \cup (C_n^c \setminus \{y_{n-1}\})
\end{equation*}
and 
\begin{equation*}
lk (y_{n-1} ) = (B_n^c \setminus \{y_{n-1}\}) \cup (C_n^c \setminus \{y_{n-1}\}).
\end{equation*}
Using a straight-forward deformation retraction, we simplify the complexes by identifying the vertices $x_{2n-4}$, $x_{2n-3}$, and $x_{2n-2}$, respectively, with their corresponding horizontal tiles $x_{n-3}$, $x_{n-2}$, and $x_{n-1}$, respectively.  Thus,
\begin{equation*}
lk (y_{n-1}) = (\Gamma_{n-2}^c * \{x_{n-2} y_n\}) \cup (\Gamma_{n-3}^c * \{x_{n-3} x_{n-1} y_{n-2}\}).
\end{equation*}

Applying Proposition~\ref{gamma_prop}, $\Gamma_{n-2}^c$ is homotopic to a sphere of dimension $n-4$.  By taking the cone over $\Gamma_{n-2}^c$ by the line $\langle x_{n-2} y_n\rangle$ we remove the singularity and effectively fill in the $(n-4)$-dimensional hole, creating a complex homotopic to a ball of dimension $n-3$.

We note that the vertices $x_{n-2}$, $x_{n-1}$, and $y_n$ are not elements of either $\Gamma_{n-2}^c$ nor $\Gamma_{n-3}^c$.  So without loss of generality, we can assume $\Gamma_{n-2}^c$ is on the boundary of the $(n-3)$-dimensional ball $\Gamma_{n-2}^c * \{x_{n-2} y_n\}$.  Further, both $x_{n-3}$ and $y_{n-2}$ are included in $\Gamma_{n-2}^c$. In fact, the maximal faces of $\Gamma_{n-2}^c$ can be partitioned disjointly into faces which contain $x_{n-3}$ and faces which contain $y_{n-2}$, because the generating tilings must have exactly one of these two tiles.  More specifically, 

\begin{equation}\label{recurrence_eq}
\Gamma_{n-2}^c = (\Gamma_{n-3}^c *\{x_{n-3}\} ) \cup (\Gamma_{n-4}^c *\{x_{n-4} y_{n-3} y_{n-2}\}).
\end{equation}

We can now consider the rest of the complex, namely $\Gamma_{n-3}^c *\{x_{n-3} x_{n-1} y_{n-2}\}$.  We know that $\Gamma_{n-3}^c$ is homotopic to a $(n-5)$-dimensional sphere and is a subcomplex of $\Gamma_{n-2}^c$ by extension of the relationship given above in Equation~\ref{recurrence_eq}.  We wish to show $\Gamma_{n-3}^c *\{x_{n-3} y_{n-2}\}$ contains every facet of $\Gamma_{n-2}^c$; therefore by adjoining $x_{n-1}$, we create a cone over the boundary of a $(n-3)$-dimensional ball which is a homotopic to a sphere dimension $n-3$.

First, we easily have that $\Gamma_{n-3}^c *\{x_{n-3} y_{n-2}\}$ contains all faces of $\Gamma_{n-2}^c$ which contain $x_{n-3}$ as needed for the first term in the union given in Equation~\ref{recurrence_eq}.  Next, we note that $\Gamma_{n-4}^c *\{x_{n-4}\}$ is contained in the complex $\Gamma_{n-3}^c$, so $\Gamma_{n-3}^c *\{y_{n-2}\}$ contains all facets $F*\{x_{n-4} y_{n-2}\}$ where $F\in \Gamma_{n-4}^c$.  Further, every maximal face on the boundary $\Gamma_{n-2}^c$ that contains $x_{n-4} y_{n-2}$ also contains the vertex $y_{n-3}$.  As $y_{n-3}$ is a part of every face in $\Gamma_{n-4}^c *\{x_{n-4} y_{n-3} y_{n-2}\}$, we may simply retract $y_{n-3}$ into any other vertex in the part of the boundary determined by $\Gamma_{n-4}^c *\{x_{n-4} y_{n-3} y_{n-2}\}$. Thus we have homotopy equivalence, and we see that $C_n^c \setminus \{y_{n-1}\}$ is the cone of the vertex $x_{n-1}$ over the complex $\Gamma_{n-3}^c$ which is homotopic to the boundary of a $(n-3)$-dimensional ball given by $B_n^c \setminus \{y_{n-1}\}$.  As the interior of this ball, given by faces containing $x_{n-2}$ or $y_n$, is not contained in $C_n^c \setminus \{y_{n-1}\}$, therefore, $lk (y_{n-1})$ is homotopic to the sphere $S^{n-3}$.

It is now straight-forward to show the $del(y_{n-1})$ is contractible.  The complex $del (y_{n-1})$ differs from the link by the complex $A_n^c = \Gamma_{n-2}^c * \{x_{n-1} x_{n-2} x_{2n-3} x_{2n-2}\}$.  In particular, the vertex $x_{n-1}$ is joined with $\Gamma_{n-2}^c$.  This join fills the interior of the sphere given by the link.  Thus, $del(y_{n-1})$ is homotopic to a ball of dimension $n-2$, and hence is contractible.  (See Example~\ref{ex_linkdel} for a concrete demostration of this process.)
\end{proof}

The following example illustrates Proposition~\ref{link_deletion_prop} using generating tilings of a $2\times 4$ rectangle.
\begin{example}\label{ex_linkdel}
Recall, the domino ideal corresponding to $T_4$, namely 
\begin{equation*}
I_4 =(x_1 x_3 x_4 x_6, x_1x_4y_3y_4, x_2x_5y_1y_4, x_3x_6y_1y_2, y_1y_2y_3y_4).
\end{equation*}
The generators of $I_4$ are divided into three disjoint sets with corresponding facet complexes:\\

\[\begin{array}{rclcrcl}
A_4 &=& \{x_1x_4y_3y_4, y_1y_2y_3y_4\} & \hspace{.25in} & A_4^c &=&\langle x_2x_3x_4x_6y_1y_2, x_1x_2x_3x_4x_5x_6\rangle\\
&&&&&=&\langle x_2x_3x_5x_6\rangle * \langle y_1y_2, x_1x_4\rangle\\
&&&&&=&\langle x_2x_3x_5x_6\rangle * \Gamma_{2}^c\\
B_4&=&\{x_1x_3x_4x_6, x_3x_6y_1y_2\} & & B_4^c&=& \langle x_2x_5y_1y_2y_3y_4, x_1x_2x_4x_5y_3y_4 \rangle\\
&&&&&=& \langle x_2x_5y_3y_4\rangle * \langle y_1y_2, x_1x_4 \rangle\\
&&&&&=& \langle x_2x_5y_3y_4\rangle * \Gamma_2^c\\
C_4&=&\{x_2x_5y_1y_5\} & & C_4^c&=&\langle x_1x_3x_4x_6y_2y_3\rangle\\
&&&&&=&\langle x_1x_3x_4x_6y_2y_3\rangle*\Gamma_1^c\\
\end{array}\]


\noindent Identifying horizontal pairs, $x_i$ and $x_{n+i-1}$, we have 
\begin{equation*}
lk(y_3)=B_4^c\setminus \{y_3\} \cup C_4^c\setminus \{y_3\} =\langle x_2x_5y_4\rangle * \Gamma_2^c \cup \langle x_1x_3x_4x_6y_2\rangle*\Gamma_1^c. 
\end{equation*}
Thus, $B_4^c\setminus \{y_3\}$ is the tetrahedron $x_2 y_1 y_2 y_4$ along with the triangle $x_1x_2y_4$, and $C_4^c\setminus \{y_3\}$ is the triangle $x_1x_3y_2$.  The tetrahedron $x_2x_3y_1y_2$ and triangle $x_1x_2x_3$ of $A_4^c$, pictured in Figure~\ref{fig_fill_in}, fill in the hole in the link in $del(y_3)=V_4^c \cup B_4^c\setminus \{y_3\} \cup C_4^c\setminus \{y_3\}$.  $A_4^c$ is the simplicial join of the line $x_2 x_3$ with $\Gamma_2^c =\langle x_1, y_1y_2\rangle$.  The complex $del(y_3)$ differs from the link by $A_4^c$, thus $del(y_3)$ is contractible.\\

\begin{center}
\begin{tabular}{lcl}
\begin{minipage}{1.75in}
\begin{tikzpicture}[scale=.5]
\clip(-1,-3.1) rectangle (7,4);
	\draw[fill=blue!50] (1.2,3)--(2,-2)--(4,-1)--(1.2,3);
	\draw[fill=blue!50] (1.2,3)--(4,-1)--(3,1)--(1.2,3);
	\draw[fill=green!50] (4,-1)--(6,-2)--(2,-2)--(4,-1);
	\draw (1.2,3)--(2,-2)--(4,-1)--(3,1)--(1.2,3)--(4,-1)--(6,-2);
	\draw[very thick, red](1.2,3)--(3,1);
	\draw[style=dashed] (2,-2)--(3,1);	
	\draw[fill=black] (2,-2) circle (3pt) node[left]{$x_2$};
	\draw[fill=black] (4,-1) circle (3pt) node[below]{$x_3$};
	\draw[fill=red] (6,-2) circle (3pt) node[right]{$x_1$};
	\draw[fill=black] (3,1) circle (3pt) node[right]{$y_2$};
	\draw[fill=black] (1.2,3) circle (3pt) node[above]{$y_1$};
\end{tikzpicture}
\captionof{figure}{$A_4^c$}
\label{fig_fill_in}
\end{minipage}
& \hspace{.25in} &
\begin{minipage}{1.75in}
\begin{tikzpicture}[scale=.5]
\clip(-1,-3.1) rectangle (7,4);
	\draw[fill=red!50] (6,-2)--(4,-1)--(3,1)--(6,-2);
	\draw[fill=black!50] (0,0)--(1.2,3)--(2,-2)--(0,0);
	\draw[fill=blue!50] (1.2,3)--(2,-2)--(4,-1)--(1.2,3);
	\draw[fill=blue!50] (1.2,3)--(4,-1)--(3,1)--(1.2,3);
	\draw[fill=green!70] (4,-1)--(6,-2)--(2,-2)--(4,-1);
	\draw[fill=yellow!80] (0,0) to [out=-130, in=-90](6,-2)--(2,-2)--(0,0);
	\draw (1.2,3)--(2,-2)--(4,-1)--(3,1)--(1.2,3)--(4,-1)--(6,-2);
	\draw(1.2,3)--(0,0)--(2,-2)--(6,-2)--(3,1);
	\draw(0,0) to[out=-130, in=-90] (6,-2);
	\draw[style=dashed] (0,0)--(3,1);
	\draw[style=dashed] (2,-2)--(3,1);	
	\draw[fill=black] (0,0) circle (3pt) node[left]{$y_4$};
	\draw[fill=black] (2,-2) circle (3pt) node[left]{$x_2$};
	\draw[fill=black] (4,-1) circle (3pt) node[below]{$x_3$};
	\draw[fill=black] (6,-2) circle (3pt) node[right]{$x_1$};
	\draw[fill=black] (3,1) circle (3pt) node[right]{$y_2$};
	\draw[fill=black] (1.2,3) circle (3pt) node[above]{$y_1$};
\end{tikzpicture}
\captionof{figure}{$del(y_3)$}
\end{minipage}
\end{tabular}
\end{center}
\end{example}

We now wish to apply the results of Dalili and Kummini~\cite{Dalili_Kummini} to show independence of characteristic.  
\begin{proposition}[Dalili and Kummini, Remark 2.2 and Discussion 2.5]\label{prop_DK1}  Let $\Delta$ be a simplicial complex.
\begin{enumerate}
	\item Let $V$ be the vertex set of $\Delta$ with $x\in V$.  If $\tilde{H}_* (del_{\Delta} (x); \mathbb{Z}) =0$, then $\tilde{H}_{i+1} 
	(\Delta; \mathbb{Z}) \simeq \tilde{H}_i (lk_{\Delta} (x); \mathbb{Z})$ for all $i\geq 0$.
	
	\item Let $I$ be the Stanley Reisner ideal of $\Delta$.  Then, $\beta (I)$ depends on char $\mathbb{K}$ if and only if the groups $H_* (\Delta; \mathbb{Z})$ have torsion.
\end{enumerate}
\end{proposition}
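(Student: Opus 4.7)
For part (1), the plan is to apply the reduced Mayer--Vietoris sequence to the decomposition $\Delta = del_\Delta(x) \cup \overline{st}_\Delta(x)$, where $\overline{st}_\Delta(x)$ is the closed star of $x$, consisting of all faces that contain $x$ together with their subfaces. Since $\overline{st}_\Delta(x)$ is a cone with apex $x$ over $lk_\Delta(x)$ it is contractible, and from the definitions one has $del_\Delta(x) \cap \overline{st}_\Delta(x) = lk_\Delta(x)$. The Mayer--Vietoris long exact sequence then reads
\[\cdots \to \tilde{H}_i(lk_\Delta(x); \mathbb{Z}) \to \tilde{H}_i(del_\Delta(x); \mathbb{Z}) \oplus \tilde{H}_i(\overline{st}_\Delta(x); \mathbb{Z}) \to \tilde{H}_i(\Delta; \mathbb{Z}) \to \tilde{H}_{i-1}(lk_\Delta(x); \mathbb{Z}) \to \cdots\]
and under the hypothesis both middle summands vanish, so exactness collapses this to the desired isomorphism $\tilde{H}_{i+1}(\Delta; \mathbb{Z}) \cong \tilde{H}_i(lk_\Delta(x); \mathbb{Z})$ for all $i \geq 0$.

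For part (2), the strategy is to combine Hochster's formula (in the form of Theorem~\ref{Hochster}) with the Universal Coefficient Theorem. Hochster's formula writes each $\mathbb{N}$-graded Betti number of $I$ as a sum of $\mathbb{K}$-dimensions of reduced simplicial homology groups of induced subcomplexes, so characteristic dependence of $\beta(I)$ is controlled entirely by characteristic dependence of these dimensions. The Universal Coefficient Theorem gives, for each relevant subcomplex $\Gamma$,
\[\tilde{H}_n(\Gamma; \mathbb{K}) \cong \bigl(\tilde{H}_n(\Gamma; \mathbb{Z}) \otimes_\mathbb{Z} \mathbb{K}\bigr) \oplus \mathrm{Tor}_1^\mathbb{Z}\bigl(\tilde{H}_{n-1}(\Gamma; \mathbb{Z}), \mathbb{K}\bigr).\]
If all integral homology is torsion-free the $\mathrm{Tor}$ term vanishes, so $\dim_\mathbb{K} \tilde{H}_n(\Gamma; \mathbb{K})$ equals the $\mathbb{Z}$-rank of $\tilde{H}_n(\Gamma; \mathbb{Z})$ for every field $\mathbb{K}$, forcing characteristic-independence of $\beta(I)$. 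Conversely, if some $\tilde{H}_n(\Gamma; \mathbb{Z})$ contains $p$-torsion, comparing $\mathbb{K} = \mathbb{F}_p$ with $\mathbb{K} = \mathbb{Q}$ produces distinct dimensions and hence distinct Betti numbers.

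The main obstacle is a bookkeeping one: the statement is phrased in terms of torsion in $H_*(\Delta; \mathbb{Z})$ for the single complex $\Delta$, while the Hochster sum of part (2) naturally ranges over all induced subcomplexes of $\Delta$. The careful step is to verify that in the Stanley--Reisner setup at hand the two notions of ``carrying torsion'' agree --- that torsion appearing in any induced subcomplex is already detectable in $\Delta$ itself, or equivalently that the biconditional is meant with respect to the appropriate family of subcomplexes. Once that identification is in place, the universal-coefficient argument above delivers both directions of the claimed equivalence, and part (1) is an immediate corollary of exactness in Mayer--Vietoris.
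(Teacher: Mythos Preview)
The paper does not prove this proposition; it is stated as a citation of results from Dalili and Kummini (Remark~2.2 and Discussion~2.5 in~\cite{Dalili_Kummini}) and used as a black box. So there is no ``paper's own proof'' to compare against --- you have supplied an argument where the authors supply only a reference.

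That said, your arguments are the standard ones and are essentially correct. For part~(1), the Mayer--Vietoris decomposition $\Delta = del_\Delta(x)\cup \overline{st}_\Delta(x)$ with intersection $lk_\Delta(x)$ is exactly the right tool, and the collapse of the long exact sequence under the vanishing hypotheses is immediate. For part~(2), Hochster plus Universal Coefficients is again the canonical route.

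Your caveat about part~(2) is well taken and is a genuine imprecision in the paper's paraphrase rather than a gap in your reasoning. Hochster's formula (in any of its variants) sums over induced subcomplexes, so the characteristic-dependence of $\beta(I)$ is governed by torsion in the integral homology of \emph{all} such subcomplexes, not merely of $\Delta$ itself. The biconditional as literally written (``$H_*(\Delta;\mathbb{Z})$ have torsion'') is too narrow in general; it should be read as ``some relevant subcomplex has torsion in its integral homology.'' You are right not to attempt to prove that torsion in a subcomplex forces torsion in $\Delta$ --- that is false in general --- and instead to flag that the intended statement ranges over the Hochster family. In the application the authors make (Theorem~\ref{thm_independent}), they only use the torsion-free direction for the specific complexes arising from $I_n$, so the imprecision does no damage there.
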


Thus, we can state our main theorem.
\begin{theorem}\label{thm_independent}
Let $R=k[x_1,\ldots,x_{2n-2},y_1,\ldots,y_n]$, and let $I_n$ be the domino ideal corresponding to $T_n$.  Then $\beta_{i,j}(I_n)$ is independent of $\mbox{char}(k)$ for all $i,j\in\mathbb{Z}$.
\end{theorem}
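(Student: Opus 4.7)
The plan is to combine Theorem~\ref{Hochster} with Proposition~\ref{prop_DK1} to reduce the claim to a topological statement. By the Alilooee--Faridi formula, each graded Betti number $\beta_{i,d}(I_n)$ is a sum of dimensions $\dim_k \widetilde{H}_{i-1}(\Gamma^c_{V(\Gamma)};k)$ over induced subcollections $\Gamma \subseteq \Delta(I_n)$. By the universal coefficient theorem, these dimensions are independent of $\operatorname{char}(k)$ provided each integral homology group $\widetilde{H}_*(\Gamma^c_{V(\Gamma)};\mathbb{Z})$ is torsion-free, which is the content of Proposition~\ref{prop_DK1}(2). So the task reduces to verifying torsion-freeness of $\widetilde{H}_*(\Gamma^c_{V(\Gamma)};\mathbb{Z})$ for every induced subcollection.

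First, for the full complex $\Gamma=\Delta(I_n)$, Proposition~\ref{gamma_prop} already gives $\Gamma_n^c \simeq S^{n-2}$, whose integral homology is free. The same conclusion also follows from Proposition~\ref{link_deletion_prop} together with Proposition~\ref{prop_DK1}(1): since $del_{\Gamma_n^c}(y_{n-1})$ is contractible (hence integrally acyclic) and $lk_{\Gamma_n^c}(y_{n-1}) \simeq S^{n-3}$, we obtain the shift isomorphism $\widetilde{H}_{i+1}(\Gamma_n^c;\mathbb{Z}) \cong \widetilde{H}_i(lk_{\Gamma_n^c}(y_{n-1});\mathbb{Z})$, from which torsion-freeness is immediate.

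For a proper induced subcollection $\Gamma$ on vertex subset $\mathcal{Y} \subsetneq V$, I would iterate this deletion-link argument by induction on $n$. The strategy is to select the vertical-domino vertex $y_k$ of largest index appearing in $V(\Gamma^c_{V(\Gamma)})$ and imitate the rightmost-domino decomposition from the proof of Proposition~\ref{link_deletion_prop}: express $\Gamma^c_{V(\Gamma)}$ as a union of joins analogous to the $A_n^c,\, B_n^c,\, C_n^c$ splitting, verify the deletion piece remains contractible via its simplicial join with a free vertex, and identify the link piece (up to homotopy) as the complementary complex of a strictly smaller instance of the same family. Applying Proposition~\ref{prop_DK1}(1) together with the inductive hypothesis then yields torsion-freeness.

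The main obstacle I anticipate is precisely this last step: an arbitrary induced subcollection need not be the facet complex of a domino ideal on some $2\times m$ rectangle, so the induction must be formulated over a broader class of complexes—namely, induced subcollections of $\Delta(I_m)$ for all $m\leq n$. I expect this to require a finite case analysis on which of the rightmost dominoes $y_n,\, y_{n-1},\, x_{n-1},\, x_{n-2},\, x_{2n-3},\, x_{2n-2}$ lie in $\mathcal{Y}$, with each case either reducing to a smaller member of the same family or collapsing to a contractible complex outright. Once this bookkeeping is in place, Proposition~\ref{prop_DK1}(1) carries the torsion-freeness through each reduction, completing the proof.
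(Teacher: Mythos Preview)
Your second paragraph \emph{is} the paper's proof: the authors simply invoke Proposition~\ref{link_deletion_prop} to get $\widetilde{H}_{i}(\Gamma_n^c;\mathbb{Z})\cong\widetilde{H}_{i-1}(lk_{\Gamma_n^c}(y_{n-1});\mathbb{Z})$ via Proposition~\ref{prop_DK1}(1), observe that a sphere has torsion-free homology, and then cite Proposition~\ref{prop_DK1}(2) to conclude. The paper stops there; it never discusses proper induced subcollections at all.

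Everything from your third paragraph onward goes beyond what the paper does. Your concern is legitimate: Theorem~\ref{Hochster} really does sum over \emph{all} induced subcollections $\Gamma$, so torsion-freeness of the single complex $\Gamma_n^c$ is, on its face, not enough---and Proposition~\ref{prop_DK1}(2) as quoted (checking only $H_*(\Delta;\mathbb{Z})$) is an oversimplification of what Hochster's formula actually requires. That said, your proposed fix is only a sketch: you explicitly flag that the induction must be run over a larger class than domino ideals, and that the case analysis on which rightmost dominoes lie in $\mathcal{Y}$ has not been carried out. Until that bookkeeping is done, the gap you identified remains open in your write-up as well. In short, you and the paper share the same core argument for the full complex; you correctly notice an issue the paper elides, but you have not yet closed it.
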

\begin{proof}
The theorem is a consequence of Proposition~\ref{link_deletion_prop} and Dalili and Kummini's remarks, because $\tilde{H}_{i-1} (\Gamma_{n}^c; \mathbb{Z}) \simeq \tilde{H}_i (lk_{\Gamma_{n}^c}(y_{n-1}); \mathbb{Z})$ and as spheres the homology groups $H_* (lk_{\Gamma_{n}^c}(y_{n-1}); \mathbb{Z})$ have no torsion.
\end{proof}

In the next section, we describe a splitting of the domino ideal, $I_n$, into two smaller ideals, $V_n$ and $U_n$, in order to provide a recursive description for the graded Betti numbers of $I_n$ as well as determine projective dimension and Castelnuovo-Mumford regularity.


\section{Splitting Ideals}

Splittable monomial ideals were introduced by Eliahou and Kervaire~\cite{Eliahou_Kervaire}  in order to separate a monomial ideal into simpler components.  We give the definition from their work where $G(I)$ denotes the canonical generating set of the ideal $I$.
\begin{definition}
We say that $I$ is \textit{splittable} if $I$ is the sum of two non-zero monomial ideals $V$ and $U$ such that
\begin{enumerate}
\item $G(I)$ is the disjoint union of $G(V)$ and $G(U)$, and
\item there is a \textit{splitting function}
\begin{eqnarray*}
G(V\cap U) &\rightarrow& G(V) \times G(U)\\
w & \rightarrow& (\phi(w), \psi(w))\\
\end{eqnarray*}
satisfying the following properties:
\begin{enumerate}
\item[(S1)] $w=\mbox{lcm}(\phi(w),\psi(w))$ for all $w\in G(W) =G(V\cap U)$, and
\item[(S2)] for every subset $G'\subset G(W)$, both $\mbox{lcm}$ $\phi(G')$ and $\mbox{lcm}$ $\psi(G')$ strictly divide $\mbox{lcm}$ $G'$.
\end{enumerate}
\end{enumerate}
\end{definition}

We will split the generators of the ideal $I_n$ into disjoint sets by rightmost dominoes in the tiling.  A tiling must either end with the vertical domino, $y_n$, or two horizontal dominoes, $x_{n-1}$ and $x_{2n-2}$.  Thus, set $V_n$ to be the ideal generated by the set of tilings containing a rightmost vertical tile; that is, $V_n=\{\tau \in T_n\hspace{1mm}:\hspace{1mm} y_n | \tau\}$.  Let $U_n$ be the ideal generated by tilings containing two rightmost horizontal tiles; that is, $U_n=\{\tau \in T_n \hspace{1mm}:\hspace{1mm} x_{n-1}x_{2n-2} | \tau\}$.  We check the conditions for this to be a splitting.

\begin{proposition}\label{firstsplit}
Let $T_n$ be the set of domino tilings of a $2\times n$ rectangle, and let $I_n$ be the domino ideal associated to $T_n$.  Then $I_n=V_n+U_n$ is a splitting (where $V_n$ and $U_n$ are defined as above).
\end{proposition}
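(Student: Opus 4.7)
The plan is to verify directly the two clauses of the Eliahou--Kervaire definition, exploiting a single geometric observation: the rightmost column of a $2 \times n$ rectangle admits exactly two coverings by a tiling, namely by the single vertical tile $y_n$ or by the horizontal pair $x_{n-1}, x_{2n-2}$, and these coverings are mutually exclusive because all three of these tiles compete for the squares $(1,n)$ and $(2,n)$.

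First I would establish the disjointness condition $G(I_n)=G(V_n)\sqcup G(U_n)$. This follows immediately from the observation above: every $\tau \in T_n$ covers column $n$ in exactly one of the two ways, so its monomial is divisible by either $y_n$ or by $x_{n-1}x_{2n-2}$ but not both, placing it in exactly one of $G(V_n)$, $G(U_n)$. Next I would construct the splitting function. For each minimal generator $w\in G(V_n\cap U_n)$, the standard description of the generators of an intersection of monomial ideals guarantees the existence of some $v\in G(V_n)$ and $u\in G(U_n)$ with $w=\mathrm{lcm}(v,u)$; I would set $\phi(w):=v$ and $\psi(w):=u$ for any such choice. Property (S1) then holds by construction.

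The heart of the argument is (S2). Here I would emphasize the same geometric observation in a slightly sharper form: the variables $x_{n-1}$ and $x_{2n-2}$ appear in every generator of $U_n$ but in no generator of $V_n$, while $y_n$ appears in every generator of $V_n$ and in no generator of $U_n$. Now fix $G'\subseteq G(W)$ where $W=V_n\cap U_n$. Each $w\in G'$ is divisible by $y_n x_{n-1} x_{2n-2}$, hence so is $\mathrm{lcm}\, G'$. On the other hand $\phi(G')\subseteq G(V_n)$, so $\mathrm{lcm}\,\phi(G')$ is not divisible by $x_{n-1}$, and since it certainly divides $\mathrm{lcm}\, G'$ (each $\phi(w)$ divides $w$), the division is strict. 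The symmetric argument with $y_n$ handles $\mathrm{lcm}\,\psi(G')$.

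I do not expect a genuine obstacle: the entire verification reduces to the elementary combinatorial fact about which variables are compatible with which covering of the last column, and no induction or intricate calculation is required. The only mild point of care is ensuring the splitting function is well-defined on $G(V_n\cap U_n)$, but this follows from the general fact that minimal generators of an intersection $V_n\cap U_n$ of monomial ideals are lcm's of pairs from $G(V_n)\times G(U_n)$.
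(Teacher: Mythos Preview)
Your proposal is correct and follows essentially the same approach as the paper: both exploit the dichotomy that the rightmost column forces $y_n \mid \tau$ or $x_{n-1}x_{2n-2}\mid\tau$ exclusively, and both verify (S2) by noting that generators of $V_n$ omit $x_{n-1},x_{2n-2}$ while generators of $U_n$ omit $y_n$. The only difference is cosmetic: the paper writes down the explicit splitting function $\phi(w)=w/(x_{n-1}x_{2n-2})$, $\psi(w)=w/y_n$, whereas you invoke the general fact that minimal generators of $V_n\cap U_n$ are $\mathrm{lcm}$'s of pairs from $G(V_n)\times G(U_n)$ and make an arbitrary choice---either route yields the same verification of (S1) and (S2).
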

\begin{proof}

By definition, the set of minimal monomial generators of $I_n$, $G(I_n)$, is the disjoint union of the sets of minimal monomial generators of $V_n$ and $U_n$, $G(V_n)$ and $G(U_n)$ (respectively) as a domino tiling can either contain the tile $y_n$ or the pair of tiles $\{x_{n-1}, x_{2n-2}\}$ but not both.  Further, we define the splitting function $(\phi, \psi):\hspace{1mm} G(V_n \cup U_n) \rightarrow G(V_n) \times G(U_n)$ where $\displaystyle{\phi (w) = \frac{w}{x_{n-1} x_{2n-2}}}$ and $\displaystyle{\psi(w)= \frac{w}{y_n}}$.  Now, we see 
\begin{equation*}
\mbox{lcm} (\phi(w), \psi(w)) = \mbox{lcm}\left(\frac{w}{x_{n-1} x_{2n-2}}, \frac{w}{y_n}\right) = w
\end{equation*}
so (S1) is satisfied.  To check condition (S2), we have

\[\scalemath{.93}{\mbox{lcm} (\phi(S)) = \displaystyle{\mbox{lcm} \left(\frac{w_1}{x_{n-1} x_{2n-2}},\frac{w_2}{x_{n-1}x_{2n-2}}, \ldots, \frac{w_k}{x_{n-1} x_{2n-2}}\right)} =\displaystyle{\frac{\mbox{lcm}(w_1, w_2, \ldots, w_k)}{x_{n-1}x_{2n-2}}}< \mbox{lcm}(w_1, w_2, \ldots, w_k) = \mbox{lcm} (S)},\]


\noindent and similarly for $\psi(S)$.  Thus, $I_n$ can be split by $V_n$ and $U_n$.
\end{proof}

We can now apply the following theorem due to Eliahou and Kervaire~\cite{Eliahou_Kervaire} for Betti numbers, which appears as a condition to Francisco, H\'a, and Van Tuyl's Proposition 2.1 categorizing Betti splittings in~\cite{Francisco_Ha_VanTuyl}.
 

\begin{theorem}\label{splitting_theorem}
Suppose that $I$ is a splittable monomial ideal with splitting $I=V+U$.  Then for all $i,j\geq 0$,
\begin{equation*}
\beta_{i,j} (I) = \beta_{i,j}(V) + \beta_{i,j} (U) + \beta_{i-1,j}(V\cap U).
\end{equation*}
\end{theorem}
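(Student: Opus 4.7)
The plan is to prove the formula via the Mayer--Vietoris short exact sequence of $R$-modules
\begin{equation*}
0 \longrightarrow V \cap U \xrightarrow{\iota} V \oplus U \xrightarrow{\sigma} I \longrightarrow 0,
\end{equation*}
where $\iota(w) = (w, -w)$ and $\sigma(v, u) = v + u$. Exactness on the left and right is clear, and the kernel of $\sigma$ consists of pairs $(w, -w)$ with $w \in V \cap U$. Applying $\operatorname{Tor}_\bullet^R(-, k)$ with $k = R/\mathfrak{m}$ and reading off the $j$-th graded piece yields a long exact sequence
\begin{equation*}
\cdots \to \operatorname{Tor}_i^R(V \cap U, k)_j \xrightarrow{\alpha_i} \operatorname{Tor}_i^R(V, k)_j \oplus \operatorname{Tor}_i^R(U, k)_j \to \operatorname{Tor}_i^R(I, k)_j \to \operatorname{Tor}_{i-1}^R(V \cap U, k)_j \to \cdots .
\end{equation*}
Since $\beta_{i,j}(M) = \dim_k \operatorname{Tor}_i^R(M, k)_j$, the claimed identity is equivalent to the assertion that every connecting map $\alpha_i$ vanishes; for then the long exact sequence splits into short exact sequences whose alternating dimensions give the stated Betti-splitting formula.

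To force $\alpha_i = 0$ for all $i$, I would pass to minimal graded free resolutions $F_\bullet \to V$, $G_\bullet \to U$, and $H_\bullet \to V \cap U$, lift $\iota$ to a chain map $\widetilde{\iota}_\bullet \colon H_\bullet \to F_\bullet \oplus G_\bullet$, and study the mapping cone. The cone is a free (though possibly non-minimal) resolution of $I$, and the vanishing of all $\alpha_i$ is equivalent to the cone being minimal, i.e., to every entry of $\widetilde{\iota}_\bullet$ lying in $\mathfrak{m} = (x_1, \ldots, x_{2n-2}, y_1, \ldots, y_n)$. At homological degree zero, condition (S1) supplies exactly this: each generator $w \in G(V \cap U)$ maps to $(\phi(w), \psi(w))$ with $\operatorname{lcm}(\phi(w), \psi(w)) = w$, so the coefficients are the proper quotients $w/\phi(w)$ and $w/\psi(w)$, which are non-constant monomials in $\mathfrak{m}$. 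At higher homological degrees the argument propagates inductively: a syzygy supported on $G' \subseteq G(W)$ lives in multidegree $\operatorname{lcm}(G')$, while its image lives in multidegrees $\operatorname{lcm}\phi(G')$ and $\operatorname{lcm}\psi(G')$, which by (S2) strictly divide $\operatorname{lcm}(G')$, and the ratio is a non-constant monomial in $\mathfrak{m}$.

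The main obstacle is the bookkeeping required to propagate minimality through every homological degree: one must choose the lifts $\widetilde{\iota}_i$ so that no recombination of the strict-divisibility factors cancels to produce a unit entry in a later stage. The fact that (S2) is imposed uniformly over \emph{every} subset $G' \subseteq G(W)$ is precisely what carries the induction through, because any would-be unit entry at stage $i$ would correspond to an lcm relation among some $G'$ whose image fails to divide strictly, contradicting (S2). Once the mapping cone is verified to be a minimal free resolution of $I$, tensoring with $k$ annihilates all differentials, and the Betti numbers of $I$ decompose into the Betti numbers of $V$, $U$, and a homologically shifted copy of $V \cap U$, yielding the stated formula. Since this is the classical splitting theorem of Eliahou and Kervaire~\cite{Eliahou_Kervaire}, one can alternatively appeal to their proof directly.
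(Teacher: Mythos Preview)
The paper does not prove this theorem at all: it is stated as a quotation of the classical result of Eliahou and Kervaire~\cite{Eliahou_Kervaire} (in the graded form recorded by Francisco, H\`a, and Van Tuyl~\cite{Francisco_Ha_VanTuyl}), and is invoked as a black box thereafter. Your closing sentence already acknowledges this, so in that sense your proposal and the paper agree perfectly: cite the reference and move on.

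As for the sketch itself, the mapping-cone strategy you describe is indeed the standard route to this result (it is essentially Fatabbi's proof, which the Francisco--H\`a--Van Tuyl paper also follows). The short exact sequence, the long exact sequence in $\operatorname{Tor}$, and the equivalence between the Betti-splitting identity and the vanishing of the connecting maps $\alpha_i$ are all correct. The only place where your outline is genuinely thin is the inductive propagation of minimality: the assertion that a higher syzygy ``supported on $G'\subseteq G(W)$ lives in multidegree $\operatorname{lcm}(G')$'' is true for monomial ideals (every multigraded Betti number is concentrated on an lcm of generators, e.g.\ via the Taylor complex), but turning (S2) into the statement that the lifted maps $\widetilde{\iota}_i$ have all entries in $\mathfrak{m}$ requires an explicit construction of the lifts, not just a divisibility count. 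Eliahou and Kervaire build these lifts by hand using the splitting function, and that construction is where (S2) is actually consumed. Since you defer to their paper in the end, this is not a defect in your write-up so much as an honest indication of where the real work is.
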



First we explore the ideals $V_n$, $U_n$, and $V_n \cap U_n$. 


\begin{proposition}\label{horizontal_vertical_prop}
The graded Betti numbers of $V_n$ and $U_n$ are given by the graded Betti numbers of $I_{n-1}$ and $I_{n-2}$, respectively.  That is, 
\[\begin{array}{rcl}
\beta_{i,j} (V_n) & = &  \beta_{i,j-1}(I_{n-1})\\
\beta_{i,j} (U_n) & = & \beta_{i,j-2}(I_{n-2})
\end{array}\]
for $i,j \geq 0$. 
\end{proposition}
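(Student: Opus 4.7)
The plan is to identify $V_n$ and $U_n$ as principal monomial multiples of (isomorphic copies of) $I_{n-1}$ and $I_{n-2}$ respectively, and then deduce the Betti-number shift from two standard facts: (i) extension of an ideal along a polynomial-ring inclusion by variables not appearing in any generator preserves all graded Betti numbers, and (ii) multiplication by a monomial $m$ induces an isomorphism $I \xrightarrow{\cdot m} mI$ of graded $R$-modules that shifts every internal degree by $\deg m$.

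For $V_n$, I would first observe that every tiling $\tau \in T_n$ with $y_n \mid \tau$ factors uniquely as $\tau = y_n\cdot \sigma$, where $\sigma$ is a tiling of the leftmost $2\times (n-1)$ sub-rectangle. Relabeling the row-2 horizontal variables of that sub-rectangle (i.e., $x_{n-1+k} \mapsto x_{(n-1)-1+k}$ for $1\le k\le n-2$) exhibits $\sigma$ as a typical generator of $I_{n-1}$. Thus $V_n = y_n\cdot J_{n-1}$, where $J_{n-1}\subset R$ is an isomorphic copy of $I_{n-1}$. Since the variables $x_{n-1}$, $x_{2n-2}$, $y_n$ do not appear in any generator of $J_{n-1}$, the minimal free resolution of $I_{n-1}$ in its own polynomial ring tensors up to the minimal free resolution of $J_{n-1}$ over $R$, so $\beta_{i,j}(J_{n-1}) = \beta_{i,j}(I_{n-1})$. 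Multiplication by $y_n$ is then an isomorphism of $R$-modules that shifts grading by one, so its minimal free resolution is just the one for $J_{n-1}$ with every internal degree increased by $1$. Combining, $\beta_{i,j}(V_n) = \beta_{i,j-1}(I_{n-1})$.

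For $U_n$, the argument is structurally identical. Every $\tau \in T_n$ with $x_{n-1}x_{2n-2}\mid \tau$ factors uniquely as $\tau = x_{n-1}x_{2n-2}\cdot \sigma$ for a tiling $\sigma$ of the leftmost $2\times (n-2)$ sub-rectangle, so $U_n = x_{n-1}x_{2n-2}\cdot J_{n-2}$ with $J_{n-2}\cong I_{n-2}$. The variables $x_{n-2}$, $x_{n-1}$, $x_{2n-3}$, $x_{2n-2}$, $y_{n-1}$, $y_n$ do not appear in $J_{n-2}$, so the ring extension does not alter Betti numbers, and multiplication by the degree-$2$ monomial $x_{n-1}x_{2n-2}$ shifts all internal degrees by $2$, yielding $\beta_{i,j}(U_n) = \beta_{i,j-2}(I_{n-2})$.

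The only real care is the bookkeeping: the indexing convention $x_{n-1+k}$ for row-$2$ horizontal tiles depends on $n$, so I would spell out the bijection on variables once and then quote it in both halves of the proof. Beyond that, the argument is formal — the substantive facts it relies on (base change along a polynomial extension and degree shift under multiplication by a monomial) are standard and do not require further input from the combinatorics of tilings.
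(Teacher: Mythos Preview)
Your proposal is correct and is essentially the paper's own argument: the paper simply observes that $V_n = y_n I_{n-1}$ and $U_n = x_{n-1}x_{2n-2} I_{n-2}$ and declares the result immediate. Your version is more careful about the variable relabeling needed to identify the sub-rectangle ideal with $I_{n-1}$ (resp.\ $I_{n-2}$) and explicitly invokes the standard facts about flat base change and degree shift under monomial multiplication, but the underlying idea is identical.
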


\begin{proof}
The results follow immediately after observing that $V_n = y_n I_{n-1}$ and $U_n = x_{n-1}x_{2n-2} I_{n-2}$.
\end{proof}

We also have another splittable ideal, $V_n\cap U_n$.  These two splittable ideals, namely $I_n$ and $V_n\cap U_n$, will play an integral role in the proof of the formula for $\pd(I_n)$ in Proposition~\ref{pdProp}.


\begin{proposition}\label{SplitIntersection}
The intersection $V_n \cap U_n$ can be split into $V_{n} \cap U_n = \widehat{V}_n + \widehat{U}_n$ where
\[\begin{array}{rcl}
	\widehat{V}_n & = & \{\tau \in V_n\cap U_n \hspace{1mm}:\hspace{1mm} y_{n-1} | \tau\}\\
	\widehat{U}_n & = &\{\tau \in V_n\cap U_n \hspace{1mm}:\hspace{1mm} x_{n-2} x_{2n-3} | \tau\}.
\end{array}\]
\end{proposition}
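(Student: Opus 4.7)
The plan is to follow the same template used for the first splitting in Proposition~\ref{firstsplit}: first dissect the minimal monomial generators of $V_n \cap U_n$ to establish the disjoint decomposition at the generator level, then exhibit a splitting function by division and verify conditions (S1) and (S2).

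For the generator analysis, I would note that every minimal monomial generator of $V_n \cap U_n$ arises as a minimal element among the lcms $\mbox{lcm}(y_n \tau',\ x_{n-1} x_{2n-2} \tau'')$ with $\tau' \in T_{n-1}$ and $\tau'' \in T_{n-2}$. Such an lcm factors as
\[
w \;=\; y_n\, x_{n-1}\, x_{2n-2}\cdot \mbox{lcm}(\tau', \tau'').
\]
Here $\tau' \in T_{n-1}$ must cover column $n-1$ of the $2 \times (n-1)$ rectangle either by the vertical domino $y_{n-1}$ or by the pair of horizontal dominoes $\{x_{n-2}, x_{2n-3}\}$, and exactly one of these two options occurs. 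On the other hand $\tau'' \in T_{n-2}$ tiles only columns $1$ through $n-2$, so $\tau''$ contains none of the variables $y_{n-1}$, $x_{n-2}$, or $x_{2n-3}$ (each of which lies in or straddles column $n-1$). Consequently $w$ is divisible by $y_{n-1}$ if and only if $\tau'$ ends vertically, and divisible by $x_{n-2} x_{2n-3}$ if and only if $\tau'$ ends horizontally. This yields both the sum $V_n \cap U_n = \widehat{V}_n + \widehat{U}_n$ and the disjoint partition $G(V_n \cap U_n) = G(\widehat{V}_n) \sqcup G(\widehat{U}_n)$.

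Next I would define the splitting function in strict analogy with Proposition~\ref{firstsplit}. Any $w \in G(\widehat{V}_n \cap \widehat{U}_n)$ is divisible by both $y_{n-1}$ and $x_{n-2} x_{2n-3}$, so set
\[
\phi(w) \;=\; \frac{w}{x_{n-2} x_{2n-3}}, \qquad \psi(w) \;=\; \frac{w}{y_{n-1}}.
\]
Since $\gcd(y_{n-1},\, x_{n-2} x_{2n-3}) = 1$, property (S1), namely $\mbox{lcm}(\phi(w), \psi(w)) = w$, is immediate. Property (S2) follows from the same calculation used before: for any $G' \subseteq G(\widehat{V}_n \cap \widehat{U}_n)$,
\[
\mbox{lcm}\,\phi(G') \;=\; \frac{\mbox{lcm}\,G'}{x_{n-2} x_{2n-3}} \;<\; \mbox{lcm}\,G',
\]
and symmetrically for $\psi(G')$.

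The main obstacle is the combinatorial bookkeeping in the first step: pinning down precisely which variables the tiling $\tau''$ can and cannot contain, and thereby showing that the $y_{n-1}$ versus $x_{n-2} x_{2n-3}$ dichotomy for $\tau'$ survives intact when one passes to $\mbox{lcm}(\tau', \tau'')$. Once this variable-support argument is carefully laid out, the splitting function together with conditions (S1) and (S2) is essentially formal, inherited from the template of Proposition~\ref{firstsplit}.
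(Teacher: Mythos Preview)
Your proposal is correct and follows essentially the same route as the paper: partition the minimal generators of $V_n\cap U_n$ by whether $y_{n-1}$ or $x_{n-2}x_{2n-3}$ divides them, then define the splitting function by division exactly as in Proposition~\ref{firstsplit}. The paper's proof is a two-sentence sketch that simply asserts the disjoint partition and refers back to Proposition~\ref{firstsplit} for the splitting function, so your lcm analysis of the generators is supplying detail the paper omits rather than pursuing a different strategy.
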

\begin{proof}
We are splitting an ideal's minimal generators into two disjoint sets indexed by a vertical domino and by the corresponding pair of horizontal dominos.  The splitting function is defined similarly to the  splitting $I_n=V_n + U_n$ given in Proposition~\ref{firstsplit}, so we omit it here.
\end{proof}

Then it follows by the result of Eliahou and Kervaire~\cite{Eliahou_Kervaire} that
\begin{equation*}
\beta_{i-1,j}(V_{n} \cap U_n) = \beta_{i-1,j}(\widehat{V}_n) + \beta_{i-1,j}(\widehat{U}_n) + \beta_{i-2, j} (\widehat{V}_n \cap \widehat{U}_n),
\end{equation*}

\begin{proposition}\label{prop_relationsVnUn}
Using the previous definitions of $V_n$, $U_n$, $\widehat{V}_n$, and $\widehat{U}_n$, we have:
\begin{enumerate}
\item $\beta_{i-1,j} (\widehat{V}_n ) = \beta_{i-1,j-4}(I_{n-2})$,
\item $\beta_{i-1,j} (\widehat{U}_n) = \beta_{i-1,j-2}(V_{n-1} \cap U_{n-1})$, and
\item $\beta_{i-2,j} (\widehat{V}_n \cap \widehat{U}_n) = \beta_{i-2,j-3} (V_{n-1} \cap U_{n-1})$.
\end{enumerate}
\end{proposition}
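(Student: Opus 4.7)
The plan is to prove all three identities by a common template: exhibit an explicit factorization of each ideal's minimal monomial generators as a squarefree prefactor times a smaller, recognizable ideal, and then apply the standard fact that if $J\subseteq R$ is a monomial ideal and $m$ is a squarefree monomial whose variables are disjoint from the support of $J$, then $\beta_{i,j}(mJ)=\beta_{i,j-\deg m}(J)$.  Throughout the argument, the $R_{n-1}$- and $R_n$-conventions for labeling the bottom row of horizontal dominoes differ by a shift of $1$ (so $x_{n-2+k}\in R_{n-1}$ corresponds to $x_{n-1+k}\in R_n$), and this natural relabeling realizes $I_{n-2}$ and $V_{n-1}\cap U_{n-1}$ as isomorphic copies inside $R_n$.

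For (1), a minimal generator of $V_n\cap U_n$ that lies in $\widehat V_n$ must contain $y_{n-1}$, which forces the $V_n$-factor in the defining lcm to end in a vertical tile: $\mbox{lcm}(y_n y_{n-1}\sigma',\,x_{n-1}x_{2n-2}\sigma)$ with $\sigma',\sigma\in T_{n-2}$.  Since $y_n,y_{n-1},x_{n-1},x_{2n-2}$ all lie outside the support of any $T_{n-2}$-tiling, this lcm factors as $y_n y_{n-1}x_{n-1}x_{2n-2}\cdot\mbox{lcm}(\sigma',\sigma)$; if $\sigma'\neq\sigma$ then $\mbox{lcm}(\sigma',\sigma)$ strictly contains the degree-$(n-2)$ monomial $\sigma'$, so the generator is divisible by the smaller one coming from $\sigma=\sigma'$.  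Hence the minimal generators reduce to the case $\sigma'=\sigma$, giving $\widehat V_n=y_n y_{n-1}x_{n-1}x_{2n-2}\cdot I_{n-2}$, and the fresh degree-$4$ prefactor yields the claimed shift by $4$.

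For (2), the same analysis with the $V_n$-factor ending in a horizontal pair $\sigma_1=x_{n-2}x_{2n-3}\sigma_1^\ast$ (so $\sigma_1^\ast\in T_{n-3}$) produces $\widehat U_n=y_n x_{n-1}x_{2n-2}x_{n-2}x_{2n-3}\cdot J$, where $J$ is the ideal in $R_n$ generated by $\{\mbox{lcm}(\sigma_1^\ast,\tau):\sigma_1^\ast\in T_{n-3},\tau\in T_{n-2}\}$.  The same recipe applied to $V_{n-1}\cap U_{n-1}$ inside $R_{n-1}$ gives $V_{n-1}\cap U_{n-1}=y_{n-1}x_{n-2}x_{2n-4}\cdot J'$, with $J'$ the analogous lcm-ideal in $R_{n-1}$.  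Under the relabeling $\psi\colon R_{n-1}\to R_n$ that sends $y_{n-1}\mapsto y_n$, $x_{n-2+k}\mapsto x_{n-1+k}$ on the bottom row (in particular $x_{2n-4}\mapsto x_{2n-3}$), and every other variable to itself, one has $\psi(J')=J$ and $\psi(y_{n-1}x_{n-2}x_{2n-4})=y_n x_{n-2}x_{2n-3}$, whence $\widehat U_n=x_{n-1}x_{2n-2}\cdot\psi(V_{n-1}\cap U_{n-1})$ with the two fresh variables $x_{n-1},x_{2n-2}$ lying outside the image of $\psi$, giving the shift by $2$.

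For (3), the generators of $\widehat V_n\cap\widehat U_n$ produced as lcms of minimal generators of $\widehat V_n$ and $\widehat U_n$ have the form $y_n y_{n-1}x_{n-1}x_{2n-2}x_{n-2}x_{2n-3}\cdot\mbox{lcm}(\tau_1,\sigma,\tau_2)$ with $\tau_1,\tau_2\in T_{n-2}$, $\sigma\in T_{n-3}$; replacing $\tau_2$ by $\tau_1$ always yields a divisor, so the minimal generators reduce to the prefactor times $\mbox{lcm}(\tau,\sigma)$ for $(\tau,\sigma)\in T_{n-2}\times T_{n-3}$, which is again the lcm-ideal $J$ from (2).  This time the appropriate relabeling $\psi'$ must fix $y_{n-1}$ (since $y_{n-1}$ appears literally in $\widehat V_n\cap\widehat U_n$): acting as the identity on every variable except for the bottom-row shift $x_{n-2+k}\mapsto x_{n-1+k}$, one finds $\psi'(V_{n-1}\cap U_{n-1})=y_{n-1}x_{n-2}x_{2n-3}\cdot J$, and hence $\widehat V_n\cap\widehat U_n=y_n x_{n-1}x_{2n-2}\cdot\psi'(V_{n-1}\cap U_{n-1})$ with three fresh prefactor variables, yielding the shift by $3$.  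The main obstacle throughout is the careful variable bookkeeping: in every case one must check that the variables claimed to be fresh really lie outside the support of the relabeled ideal, which is exactly why $\psi$ in (2) must swap $y_{n-1}\mapsto y_n$ while $\psi'$ in (3) must not.
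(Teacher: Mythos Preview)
Your argument is correct and follows the same strategy as the paper: in each case you identify the ideal in question as a squarefree monomial prefactor (on fresh variables) times a relabeled copy of the smaller ideal, and then read off the degree shift.  The paper's proof does exactly this but is considerably terser; it records the bijections on minimal generators (e.g., $\Phi(\hat\tau x_{n-2}x_{n-1}x_{2n-3}x_{2n-2}y_n)=\hat\tau x_{n-2}x_{2n-3}y_{n-1}$ for part (2)) and leaves the bottom-row reindexing between $R_{n-1}$ and $R_n$ implicit, whereas you make the maps $\psi$ and $\psi'$ and the freshness checks explicit.  There is no genuine difference in method.
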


\begin{proof}
Notice that every minimal generator in the ideal $V_n\cap U_n$ contains the factors $y_n$, $x_{n-1}$, and $x_{2n-2}$.  Let $\widehat{V}_n$and $\widehat{U}_n$ be defined as in Proposition~\ref{SplitIntersection}.

To check the remaining claims, we observe that all tilings of a $2\times (n-2)$ array from the set $T_{n-2}$ correspond to a minimal generator in both $V_n$ and $U_n$, namely $y_{n-1} y_n T_{n-2} \subseteq V_n$ and $x_{n-1} x_{2n-2} T_{n-2} \subseteq U_n$. Thus, minimal generators in $\widehat{V}_n$ correspond to elements of the form $y_{n-1} y_n x_{n-1} x_{2n-2} \tau$ for $\tau \in T_{n-2}$, and hence $\beta_{i-1,j} (\widehat{V}_n ) = \beta_{i-1,j-4}(I_{n-2})$.

Next, we note $y_{n-1} \nmid \tau \in \widehat{U}_n$, so we consider the map $\Phi : \widehat{U}_n \longrightarrow V_{n-1} \cap U_{n-1}$ where if $\tau = \widehat{\tau}x_{n-2} x_{n-1} x_{2n-3} x_{2n-2} y_n$, then $\Phi(\tau) = \widehat{\tau} x_{n-2} x_{2n-3} y_{n-1}$.  Clearly $\Phi(\tau) \in V_{n-1} \cap U_{n-1}$, as the map only removes two dominos, $x_{n-1}$ and $x_{2n-2}$, and relabels another domino.  The map may be reversed giving a bijection, and thus $\beta_{i-1,j} (\widehat{U}_n) = \beta_{i-1,j-2}(V_{n-1} \cap U_{n-1})$.

Similarly, define the bijection $\phi: \widehat{V}_n \cap \widehat{U}_n \longrightarrow V_{n-1} \cap U_{n-1}$ for which the element \\ $\tau = \widehat{\tau} x_{n-2} x_{n-1} x_{2n-3} x_{2n-2} y_{n-1}y_n\in \widehat{V}_n\cap\widehat{U}_n$ is mapped onto the element $\widehat{\tau}x_{n-2} x_{2n-3} y_{n-1}\in V_{n-1}\cap U_{n-1}$ by removing the three dominos $x_{n-1} x_{2n-2} y_n$.  This proves $\beta_{i-2,j} (\widehat{V}_n \cap \widehat{U}_n) = \beta_{i-2,j-3} (V_{n-1} \cap U_{n-1})$.
\end{proof}


\begin{proposition}\label{prop_recurrence}
Let $T_n$ be the set of domino tilings of a $2\times n$ rectangle, and let $I_n$ be the domino ideal corresponding to $T_n$.  Then for $n\geq 4$,
\begin{eqnarray*}
\beta_{i,j} (I_n) &=& \beta_{i,j-1}(I_{n-1})+\beta_{i,j-2}(I_{n-2}) + \sum_{m=0}^{n-4} \sum_{k=0}^m {m\choose k} \beta_{i-1-k,j-4-2m-k}(I_{n-2-m}) \\
&&+ \sum_{k=0}^{n-4} {n-4\choose k} \left( \beta_{i-1-k, j-2n+6-k} (V_3 \cap U_3) + \beta_{i-2-k, j-2n+5-k}(V_3 \cap U_3)\right)
\end{eqnarray*}
where $\beta_{1,5}(V_3\cap U_3) = \beta_{1,6}(V_3\cap U_3) =\beta_{2,7} (V_3 \cap U_3) =1$ and $\beta_{i,j} (V_3\cap U_3) =0$ elsewhere.
\end{proposition}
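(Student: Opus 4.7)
The plan is to derive the stated recursion by combining the two Betti splittings from Propositions~\ref{firstsplit} and~\ref{SplitIntersection} with Theorem~\ref{splitting_theorem}, and then iterating the resulting one-step recursion for the intersection term until the base complex $V_3 \cap U_3$ is reached. I will work in four steps.

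First I would apply Theorem~\ref{splitting_theorem} to the splitting $I_n = V_n + U_n$ of Proposition~\ref{firstsplit}, and then use Proposition~\ref{horizontal_vertical_prop} to rewrite the first two summands. This immediately produces the two leading terms $\beta_{i,j-1}(I_{n-1})$ and $\beta_{i,j-2}(I_{n-2})$ of the formula and reduces the problem to understanding $\beta_{i-1,j}(V_n \cap U_n)$. Next, I would apply Theorem~\ref{splitting_theorem} to the splitting $V_n \cap U_n = \widehat{V}_n + \widehat{U}_n$ from Proposition~\ref{SplitIntersection} and invoke all three parts of Proposition~\ref{prop_relationsVnUn}. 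Writing $W_m(a,b) := \beta_{a,b}(V_m \cap U_m)$, this yields the one-step recursion
\begin{equation*}
W_n(a,b) = \beta_{a,\, b-4}(I_{n-2}) + W_{n-1}(a,\, b-2) + W_{n-1}(a-1,\, b-3),
\end{equation*}
valid for $n \geq 4$.

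The third step is to unfold this recursion cleanly. I would introduce the shift operator $T$ on functions of $(a,b)$ defined by $(Tf)(a,b) = f(a,\, b-2) + f(a-1,\, b-3)$, so that the recursion becomes $W_n = \beta_{\,\cdot\,,\,\cdot\,-4}(I_{n-2}) + T(W_{n-1})$. A short induction on $m$, whose inductive step is a direct application of Pascal's identity, shows
\begin{equation*}
(T^m f)(a,b) = \sum_{k=0}^{m} \binom{m}{k} f(a-k,\, b-2m-k).
\end{equation*}
Iterating the one-step recursion $n-3$ times down to the base case $W_3$ and applying this closed form gives
\begin{equation*}
W_n(i-1,\, j) = \sum_{m=0}^{n-4}\sum_{k=0}^{m} \binom{m}{k}\, \beta_{i-1-k,\, j-4-2m-k}(I_{n-2-m}) + \sum_{k=0}^{n-3}\binom{n-3}{k}\, \beta_{i-1-k,\, j-2n+6-k}(V_3 \cap U_3).
\end{equation*}

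Finally, I would reconcile the single binomial sum with the pair of $\binom{n-4}{k}$ sums appearing in the statement. By Pascal's identity, $\binom{n-3}{k} = \binom{n-4}{k} + \binom{n-4}{k-1}$; splitting the $V_3 \cap U_3$ sum accordingly and reindexing the second piece by $k \mapsto k+1$ replaces the $\binom{n-3}{k}$ sum by precisely the two $V_3 \cap U_3$ contributions in the proposition, shifted respectively by $(i-1,\,j-2n+6)$ and $(i-2,\,j-2n+5)$. The only delicate point is keeping the binomial coefficients of the iterated shifts in lockstep across the two different recursions; once the operator $T$ is introduced this reduces to the elementary identity $\binom{m-1}{k-1} + \binom{m-1}{k} = \binom{m}{k}$, so the real content of the argument is organized entirely by the two Betti splittings and the index bookkeeping.
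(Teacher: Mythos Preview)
Your proposal is correct and follows essentially the same approach as the paper: both iterate the one-step recursion $\beta_{i',j'}(V_{n'}\cap U_{n'}) = \beta_{i',j'-4}(I_{n'-2}) + \beta_{i',j'-2}(V_{n'-1}\cap U_{n'-1}) + \beta_{i'-1,j'-3}(V_{n'-1}\cap U_{n'-1})$ and track the resulting multiplicities via Pascal's identity. The only cosmetic difference is that the paper stops the iteration at $V_4\cap U_4$ (after $n-4$ steps, yielding the $\binom{n-4}{k}$ coefficients directly) and then splits once more, whereas you iterate one further step to $V_3\cap U_3$ (producing a single $\binom{n-3}{k}$ sum) and then apply Pascal to recover the two $\binom{n-4}{k}$ sums; your shift operator $T$ is simply a tidy notational device for the same bookkeeping.
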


\begin{proof}
Applying Theorem~\ref{splitting_theorem} and Proposition~\ref{horizontal_vertical_prop}, we have
		\[\begin{array}{rcl}
			\beta_{i,j}(I_n) &=& \beta_{i,j}(V_n)+\beta_{i,j}(U_n)+\beta_{i-1,j}(V_n\cap U_n)\\
						& = & \beta_{i,j-1}(I_{n-1})+\beta_{i,j-2}(I_{n-2})+\beta_{i-1,j}(V_n\cap U_n)\\
		\end{array}\]

It is immediate that the first two summands agree.  We now apply Propositions~\ref{SplitIntersection} and \ref{prop_relationsVnUn}, to repeatedly split the ideal $(V_{k} \cap U_{k})$ for $k=n-1, n-2, \ldots, 4$.  Each time the ideal is split, we have
\begin{equation*}
\beta_{i',j'}(V_{n'} \cap U_{n'})= \beta_{i',j'-4} (I_{n'-2}) + \beta_{i', j'-2}(V_{n'-1}\cap U_{n'-1}) + \beta_{i'-1, j'-3}(V_{n'-1}\cap U_{n'-1}). 
\end{equation*}
In particular, consider the first splitting of $V_n\cap U_n$:
\begin{equation}\label{eq_firstsplit}
\beta_{i-1,j}(I_n)=\beta_{i-1,j-2}(V_{n-1}\cap U_{n-1}) +\beta_{i-2,j-3}(V_{n-1}\cap U_{n-1})
\end{equation}
The index $m$ in the sum of Proposition~\ref{prop_recurrence} counts the number of times the Propositions~\ref{SplitIntersection} and \ref{prop_relationsVnUn} have been applied to equation~\ref{eq_firstsplit}; that is, the number of times we have split the ideal $V_{k}\cap U_k$ in order to calculate Betti numbers in terms of $I_{k-2}$ and $V_{k-1}\cap U_{k-1}$.  For $m>0$, each $\beta_{i', j'}(I_{n-2-m})$ appears as the summand of a splitting applied to $\beta_{i',j'+4}(V_{n-m}\cap U_{n-m})$.  To determine how many times this term appears in the sum we note, the recursive relationship between the Betti numbers with index $i'$ and $n'$ is the same as that of binomial coefficients; thus any time the term $
\beta_{i',j'}(V_{n'} \cap U_{n'})$ appears we find, through splitting, the terms $ \beta_{i', j'-4} (V_{n'-1} \cap U_{n'-1}) + \beta_{i'-1, j-3}(V_{n'-1}\cap U_{n'-1})$.
Therefore a particular Betti number was determined by a higher-dimensional Betti number such that either the $i'$ index remains constant and the $j'$ index decreases by two or the $i'$ index decreases by one and the $j'$ decreases by three.  The index $k$ in Proposition~\ref{prop_recurrence} counts the number of times in $m$ splittings that the index $i'$ decreases by one.  Thus, beginning with the index $i-1$ there are ${m\choose k}$ terms who index has decreased to $i-1-k$.  Further, each splitting decreases the second index $j$ by two or three, and precisely $k$ of those times it decreases by three.  Hence, the $j$ index decreases to $j-2m-k$; that is presence of the term $\beta_{i-1,j} (V_n \cap U_n)$ implies ${m\choose k}$ copies of $\beta_{i-1-k, j-2m-k} (V_{n-m} \cap U_{n-m})$ which in turn implies ${m\choose k}$ copies of $\beta_{i-1-k, j-2m-k-4}(I_{n-m-2})$.

Now applying this to Equation~\ref{eq_firstsplit} and summing over each $0\leq k \leq m$ and $m\geq 0$, it is left to check the end conditions.  When $m=n-4$, Propositions~\ref{SplitIntersection} and \ref{prop_relationsVnUn} are applied to the intersection ideal $V_{n-m} \cap U_{n-m} = V_{4} \cap U_4$.  As there are ${n-4\choose k}$ copies of the Betti number $\beta_{i-1-k,j-2n-k+8}(V_4\cap U_4)$ for $0\leq k \leq n-4$, when this ideal is split and for all $k$, we are left with $\scalemath{.95}{\sum_{k=0}^{n-4} {n-4\choose k} \left( \beta_{i-1-k, j-2n+6-k} (V_3 \cap U_3) + \beta_{i-2-k, j-2n+5-k}(V_3 \cap U_3)\right)}$, completing the sum.
\end{proof}

\begin{proposition}\label{pdProp}
Let $T_n$ be the set of domino tilings of a $2\times n$ rectangle, and let $I_n$ be the domino ideal corresponding to $T_n$.  Then, $\pd(I_n)=n-1$.
\end{proposition}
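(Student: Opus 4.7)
The plan is to prove $\pd(I_n) = n-1$ by strong induction on $n$, using the two splittings already established ($I_n = V_n + U_n$ and $V_n \cap U_n = \widehat{V}_n + \widehat{U}_n$) to turn each Eliahou--Kervaire decomposition into a recursion on projective dimensions. The starting observation is that Theorem~\ref{splitting_theorem}, applied termwise in $j$, yields the clean identity
\[
\pd(I) = \max\bigl\{\pd(V),\ \pd(U),\ \pd(V\cap U) + 1\bigr\}
\]
for any splitting $I = V+U$, since the formula $\beta_{i,j}(I) = \beta_{i,j}(V) + \beta_{i,j}(U) + \beta_{i-1,j}(V\cap U)$ has all nonnegative summands, so the largest $i$ with some $\beta_{i,j}$ nonzero is exactly this maximum.

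Applying this to $I_n = V_n + U_n$ together with Proposition~\ref{horizontal_vertical_prop} (which gives $\pd(V_n) = \pd(I_{n-1})$ and $\pd(U_n) = \pd(I_{n-2})$), I get the primary recursion
\[
\pd(I_n) = \max\bigl\{\pd(I_{n-1}),\ \pd(I_{n-2}),\ \pd(V_n \cap U_n) + 1\bigr\}.
\]
Applying the same principle to $V_n \cap U_n = \widehat{V}_n + \widehat{U}_n$, and invoking the three identities of Proposition~\ref{prop_relationsVnUn}, I obtain the secondary recursion
\[
\pd(V_n \cap U_n) = \max\bigl\{\pd(I_{n-2}),\ \pd(V_{n-1}\cap U_{n-1}) + 1\bigr\}.
\]

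The two recursions suggest the strengthened inductive claim that $\pd(I_n) = n-1$ and $\pd(V_n \cap U_n) = n-2$ hold simultaneously for $n \geq 3$. For the base cases I would compute directly: $I_2 = (y_1 y_2,\, x_1 x_3)$ is a complete intersection so $\pd(I_2) = 1$; $I_3$ is listed explicitly in Example~\ref{pathIdealsEx} and a short computation (e.g., the Taylor complex or recognizing that its generators form a regular-like pattern) gives $\pd(I_3) = 2$; and factoring the two generators of $V_3 \cap U_3$ by their gcd shows $V_3 \cap U_3$ is, up to multiplication by a monomial, the complete intersection $(x_1 x_3,\, y_2)$, yielding $\pd(V_3 \cap U_3) = 1$.

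The inductive step is then a direct verification. Assuming the claim for all indices less than $n$, the secondary recursion gives $\pd(V_n \cap U_n) = \max\{n-3,\ (n-3)+1\} = n-2$, and the primary recursion then yields $\pd(I_n) = \max\{n-2,\ n-3,\ (n-2)+1\} = n-1$. I don't anticipate a serious obstacle here, since each step is just bookkeeping of the splitting formula; the only mild subtlety is the need to carry $V_n \cap U_n$ along as a companion quantity in the induction, which is why I would state and prove the strengthened claim rather than trying to bound $\pd(V_n \cap U_n)$ by some ad hoc argument at each stage.
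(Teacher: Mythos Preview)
Your argument is correct and is genuinely more elementary than the paper's. The key device---extracting
\[
\pd(I) = \max\{\pd(V),\ \pd(U),\ \pd(V\cap U)+1\}
\]
from the nonnegativity of the summands in Theorem~\ref{splitting_theorem}, and then running a simultaneous induction on $\pd(I_n)$ and $\pd(V_n\cap U_n)$---handles both the upper and lower bounds at once, entirely within the splitting machinery of Section~4. By contrast, the paper's proof first tracks the specific Betti number $\beta_{n-1,3n-2}(I_n)=1$ through the recursion to get $\pd(I_n)\geq n-1$, and then appeals to the topological results of Section~3 (Propositions~\ref{gamma_prop} and~\ref{link_deletion_prop}) together with the Alilooee--Faridi variant of Hochster's formula to obtain the upper bound. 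Your route avoids the topology altogether; the paper's route yields slightly more, namely the exact location and value of the top Betti number, which feeds directly into the regularity corollary.

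Two small remarks. First, a labeling slip: for $n=2$ the bottom horizontal domino is $x_2$, not $x_3$, so $I_2=(y_1y_2,\,x_1x_2)$; this does not affect your argument. Second, your base-case value $\pd(V_3\cap U_3)=1$ is correct (the two minimal generators share the factor $x_2x_4y_1y_3$, leaving the complete intersection $(x_1x_3,\,y_2)$), even though the paper's stated values $\beta_{1,5}=\beta_{1,6}=\beta_{2,7}=1$ in Proposition~\ref{prop_recurrence} appear to use a shifted indexing.
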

\begin{proof}	
	We will induct on $n\geq 3$.  For $n=3$, $I_3=(y_1y_2y_3,x_1x_3y_3,x_2x_4y_1)$ and $\pd(I_n)=2$ with $\beta_{2,7}(I_3)=1$.  Assume $\pd(I_{n-1})=n-2$ and is achieved at $\beta_{n-2, 3n-5}(I_{n-1}) =1$.  Since $I_n=V_n+U_n$ is a splitting for all $n\geq 3$ Theorem~\ref{splitting_theorem} and Proposition~\ref{horizontal_vertical_prop} provide:
		\[\begin{array}{rcl}
			\beta_{i,j}(I_n) & = & \beta_{i,j}(V_n)+\beta_{i,j}(U_n)+\beta_{i-1,j}(V_n\cap U_n)\\
			                       & = & \beta_{i,j-1}(I_{n-1})+\beta_{i,j-2}(I_{n-2})+\beta_{i-1,j}(V_n\cap U_n) \tag{$\star$}\\
		\end{array}\]
Furthermore, from Proposition~\ref{SplitIntersection}, we obtain:
		\[\scalemath{.96}{\begin{array}{rcl}
			\beta_{i,j}(I_n) & = & \beta_{i,j-1}(I_{n-1})+\beta_{i,j-2}(I_{n-2})+\beta_{i-1,j}(\widehat{V}_n)+\beta_{i-1,j}(\widehat{U}_n)+\beta_{i-2,j}(\widehat{V}_n\cap\widehat{U}_n)\\
			                       & = & \beta_{i,j-1}(I_{n-1})+\beta_{i,j-2}(I_{n-2})++\beta_{i-1,j-4}(I_{n-2}) +\beta_{i-1,j-2}(V_{n-1}\cap U_{n-1}) +\beta_{i-2,j-3}(V_{n-1}\cap U_{n-1})\\
		\end{array}}\]
Consider $i=n-1$ and $j=3n-2$.  Since $\pd(I_{n-1})=n-2$ and $\pd(I_{n-2})=n-3$, we obtain:
		\[\begin{array}{rcl}
			\beta_{n-1,3n-2}(I_n) & = & \beta_{n-2,3n-4}(V_{n-1}\cap U_{n-1})+\beta_{n-3,3n-5}(V_{n-1}\cap U_{n-1})\tag{$\star\star$}
		\end{array}\]
From the inductive hypothesis, $\pd(I_{n-1})=n-2$ and $\beta_{n-2,3n-5}(I_{n-1}) = 1$.  It follows from ($\star$) that:
		\[\begin{array}{rcl}
			\beta_{n-2,3n-5}(I_{n-1}) & = & \beta_{n-2,3n-6}(I_{n-2})+\beta_{n-2,3n-7}(I_{n-3})+\beta_{n-3,3n-5}(V_{n-1}\cap U_{n-1}).
		\end{array}\]
Since $\pd(I_{n-3})=n-4$ and $\pd(I_{n-2})=n-3$, it follows from the inductive hypothesis that:
		\[\begin{array}{rcl}
			\beta_{n-2,3n-5}(I_{n-1}) & = & \beta_{n-3,3n-5}(V_{n-1}\cap U_{n-1}).
		\end{array}\]
Moreover, since $\pd(I_{n-1})=n-2$, we can conclude that $\beta_{n-3,3n-5}(V_{n-1}\cap U_{n-1})=1$.  Then from $(\star\star)$, $\beta_{n-1,3n-2}(I_n)= 1$.  

Further, because the deletion complex is contractible, Proposition~\ref{prop_DK1} implies the link completely determines the complementary complex $\Gamma_n^c$.  Because the link is homotopic to a sphere of dimension $n-3$, the complementary complex is homotopic to a sphere of dimension $n-2$.  Thus, applying the variant of Hochster's formula due to Allilooee and Faridi in Theorem~\ref{Hochster}, $n-1$ is the maximum index $i$ for non-zero Betti numbers $\beta_{i,j}$, and therefore $\pd(I_n)=n-1$.
\end{proof}

\begin{corollary}
Let $T_n$ be the set of domino tilings of a $2\times n$ rectangle, and let $I_n$ be the domino ideal corresponding to $T_n$.  Then, $\reg(I_n)=2n-1$.
\end{corollary}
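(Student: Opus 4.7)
The plan is to pair Proposition~\ref{pdProp} with the splitting recursion. The lower bound $\reg(I_n)\ge 2n-1$ is immediate from the proof of Proposition~\ref{pdProp}, which exhibited the nonzero Betti number $\beta_{n-1,3n-2}(I_n)=1$, giving $\reg(I_n)\ge (3n-2)-(n-1)=2n-1$. The substance of the corollary is the matching upper bound.

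For the upper bound I would proceed by simultaneous induction on $n\ge 3$, proving (a) $\reg(I_n)\le 2n-1$ and (b) $\reg(V_n\cap U_n)\le 2n$. The base case $n=3$ is a direct computation. The ideal $I_3=(x_1x_3y_3,\,x_2x_4y_1,\,y_1y_2y_3)$ has minimal free resolution $0\to R(-7)\to R(-5)^2\oplus R(-6)\to R(-3)^3\to I_3\to 0$, so $\reg(I_3)=5=2\cdot 3-1$. For (b), the intersection unfolds to $V_3\cap U_3=(x_1x_2x_3x_4y_1y_3,\,x_2x_4y_1y_2y_3)$, and its two-generator resolution $0\to R(-7)\to R(-5)\oplus R(-6)\to V_3\cap U_3\to 0$ gives $\reg(V_3\cap U_3)=6=2\cdot 3$.

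For the inductive step on (b), combining Propositions~\ref{SplitIntersection} and~\ref{prop_relationsVnUn} with the Eliahou--Kervaire formula of Theorem~\ref{splitting_theorem} yields, for $n\ge 4$,
\[
\beta_{i,j}(V_n\cap U_n)=\beta_{i,j-4}(I_{n-2})+\beta_{i,j-2}(V_{n-1}\cap U_{n-1})+\beta_{i-1,j-3}(V_{n-1}\cap U_{n-1}).
\]
If the left side is nonzero, at least one summand is nonzero, and the induction hypotheses on (a) and (b) bound $j-i$ in the three summands by $2n-1$, $2n$, and $2n$ respectively; hence $\reg(V_n\cap U_n)\le 2n$. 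For the inductive step on (a), Theorem~\ref{splitting_theorem} together with Proposition~\ref{horizontal_vertical_prop} gives
\[
\beta_{i,j}(I_n)=\beta_{i,j-1}(I_{n-1})+\beta_{i,j-2}(I_{n-2})+\beta_{i-1,j}(V_n\cap U_n).
\]
Any nonzero term on the right forces $j-i\le 2n-2$, $j-i\le 2n-3$, or (using the bound from (b) just established) $j-(i-1)\le 2n$, so $j-i\le 2n-1$ in every case and $\reg(I_n)\le 2n-1$.

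The main obstacle is coordinating the two inductions: because the decomposition of Propositions~\ref{SplitIntersection} and~\ref{prop_relationsVnUn} requires $n\ge 4$, the bound $\reg(V_3\cap U_3)=6$ must be supplied by hand before the coupled induction on (a) and (b) can run, and one must take care that at each $n\ge 4$ the bound (b) is invoked in the $I_n$-step only after it has been freshly verified at level $n$. Beyond that, the argument is essentially degree bookkeeping applied to each of the three summands produced by the Eliahou--Kervaire splittings.
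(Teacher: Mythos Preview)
Your proof is correct, and it takes a genuinely different route from the paper's. The paper's entire argument for the corollary is two sentences: from Proposition~\ref{pdProp} one has $\beta_{n-1,3n-2}(I_n)\ge 1$, and ``since $3n-2$ is the greatest graded shift, $\reg(I_n)=(3n-2)-(n-1)=2n-1$.'' The lower bound is the same as yours. For the upper bound, the paper is relying on the fact that $I_n$ is a squarefree ideal in $3n-2$ variables, so no shift can exceed $3n-2$; combined with the observation (implicit in Proposition~\ref{gamma_prop} and Theorem~\ref{Hochster}) that $\Gamma_n^c\simeq S^{n-2}$ forces $\beta_{i,3n-2}(I_n)=0$ unless $i=n-1$, this pins down the top-degree strand. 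Strictly speaking, though, the paper does not argue why no $\beta_{i,j}$ with $j<3n-2$ could give $j-i>2n-1$, so as written its upper bound is asserted rather than proved.

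Your approach closes exactly that gap by running the Eliahou--Kervaire recursion itself as a bound on $j-i$: the coupled induction on (a) $\reg(I_n)\le 2n-1$ and (b) $\reg(V_n\cap U_n)\le 2n$ is the natural way to turn Propositions~\ref{horizontal_vertical_prop}, \ref{SplitIntersection}, and~\ref{prop_relationsVnUn} into a rigorous regularity bound. The only small patch needed is that your inductive step for (a) at $n=4$ (and for (b) at $n=4$) invokes $\reg(I_2)$, which is not covered by your stated base case $n=3$; adding the trivial verification $\reg(I_2)=3$ (from $I_2=(y_1y_2,x_1x_2)$) completes the argument. What your approach buys is a self-contained proof that does not lean on the homotopy type of the full complementary complex; what the paper's approach buys is brevity, at the cost of leaving the upper bound underjustified.
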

\begin{proof}
From Proposition~\ref{pdProp}, $\pd(I_n)=n-1$ with  $\beta_{n-1,3n-2}(I_n)\geq 1$.  Since $3n-2$ is the greatest graded shift, $\reg(I_n)=(3n-2)-(n-1)=2n-1$.
\end{proof}

\noindent \textbf{References}

\end{document}